\newcommand{\rii}{\operatorname{RI\!I}}
\theoremstyle{theorem}
\newtheorem{fact}{Fact}
\newtheorem{theorem}{Theorem}
\newtheorem{claim}{Claim}
\newtheorem{lemma}{Lemma}
\newtheorem{proposition}{Proposition}
\newtheorem{corollary}{Corollary}
\newtheorem{ostlund}{\"{O}stlund Conjecture}
\newtheorem{haggeyazinski}{Hagge-Yazinski Theorem}
\theoremstyle{remark}
\newtheorem{remark}{Remark}
\theoremstyle{definition}
\newtheorem{definition}{Definition}
\newtheorem{notation}{Notation}
\begin{document}

\title{RI\!I number of knot projections}
\author{Noboru Ito}
\address{The University of Tokyo, 3-8-1, Komaba, Meguro-ku, Tokyo, 153-8914, Japan}
\address{Current address: National Institute of Technology, Ibaraki College, 866, Nakane,  Hitachinaka, Ibaraki, 312-8508, Japan}
\email{nito@ibaraki-ct.ac.jp}
\author{Yusuke Takimura}
\address{Gakushuin Boys' Junior High School, 1-5-1,  Mejiro, Toshima-ku, Tokyo, 171-0031, Japan}
\email{Yusuke.Takimura@gakushuin.ac.jp}
\keywords{knot projections; \"{O}stlund Conjecture; Reidemeister moves; spherical curves}
\date{Accepted August 1, 2019}
\thanks{MSC2020: 57K10, 57R42, 57K99}
\maketitle
\begin{abstract}
Every knot projection is simplified to the trivial spherical curve not increasing double points by using deformations of types 1, 2, and 3 which are analogies of Reidemeister moves of types 1, 2, and 3 on knot diagrams.   We introduce $\rii$ number of a knot projection that is the minimum number of deformations of negative type~2 among such sequences.  By definition, it is invariant under deformations of types 1 and 3.  
This is motivated by \"{O}stlund conjecture:      
Deformations of type~1 and 3 are sufficient to describe a homotopy from any generic immersion of a circle in a two dimensional plane to an embedding of the circle (2001), which implies $\rii$ number always would be zero.     
However, Hagge and Yazinski disproved the conjecture by showing the first counterexample with $16$ double points, which implies that $\rii$ number is nontrivial.  This paper shows that $\rii$ number can be any nonnegative number.        
\end{abstract}
\section{Introduction}
A {\it{knot projection}} is the image of a generic immersion of a circle into the $2$-sphere.  Two knot projections are identified by ambient isotopy.  Any self-intersection of a knot projection is a transverse double point and is simply called a {\it{double point}}.  The {\it{trivial spherical curve}} is a knot projection with no double points.  
Deformations of types 1, 2, and 3 are local replacements defined in Fig.~\ref{1301}.  
\begin{figure}[h!]
\includegraphics[width=12cm]{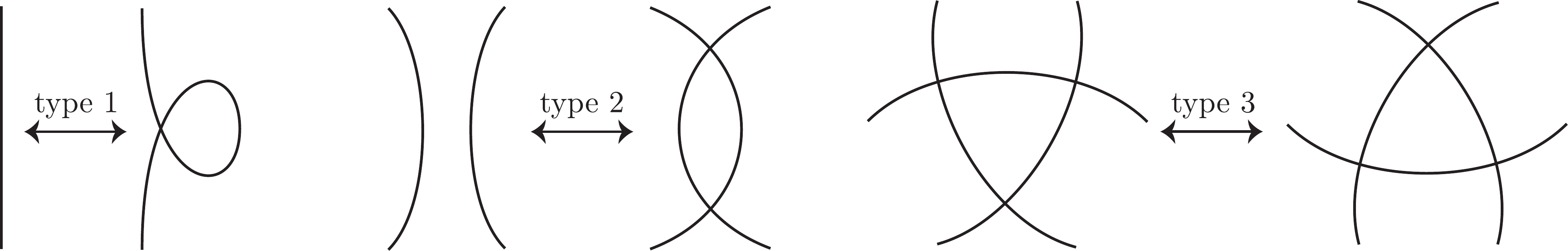}
\caption{Deformations of types 1, 2, and 3}\label{1301}
\end{figure}
This deformations are analogies of Reidemeister moves of knot diagrams.   
Every knot projection is related to the trivial spherical curve by a finite sequence of deformations of types 1, 2, and 3.  

In 2001 \cite{O}, \"{O}stlund formulated a conjecture in the following: 
\begin{ostlund}\label{ostlund_conj}
Deformations of types 1 and 3 are sufficient to obtain a homotopy from any generic immersion $S^{1}$ $\to$ $\mathbb{R}^2$ to an embedding.  
\end{ostlund}
In 2014, Hagge and Yazinski \cite{HY} disproved this conjecture as follows:  
\begin{haggeyazinski}\label{hythm}
For $P_{HY}$ that appears as Fig.~\ref{1302}, there is no finite deformations of types 1 and 3 from $P_{HY}$ to the trivial spherical curve up to ambient isotopies.     
\begin{figure}[h!]
\includegraphics[width=3cm]{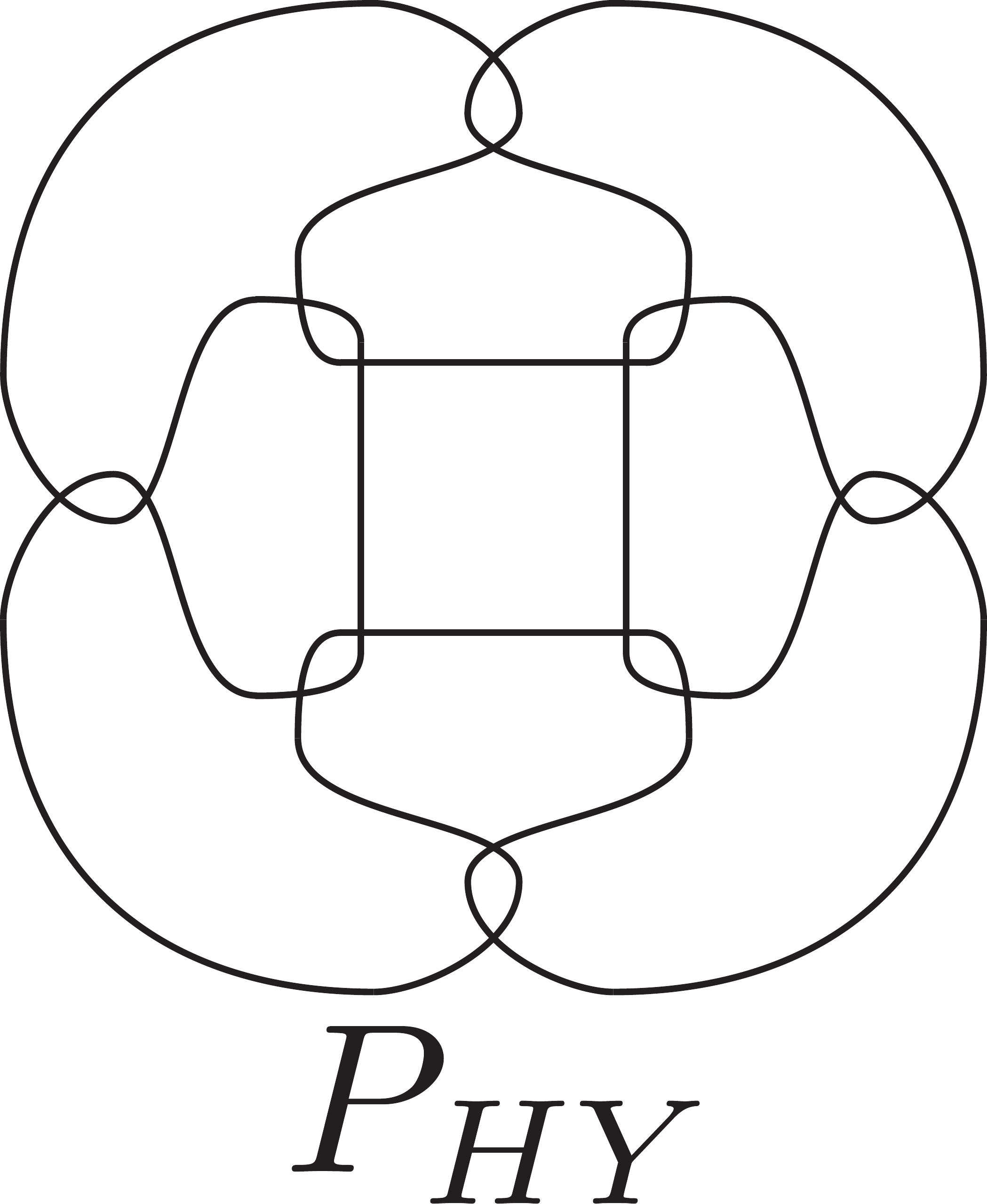}
\caption{Hagge-Yazinski's example $P_{HY}$ having  16 double points}\label{1302}
\end{figure}
\end{haggeyazinski}

In 2016, in \cite{IT15}, the authors obtain a generalization of the Hagge-Yazinski Theorem, that is, we show that there exists an infinite family of knot projections,  which are counterexamples of the \"{O}stlund Conjecture for any $n$ double points ($n \ge 15$, for 15 double points, see Fig.~\ref{1302a}),    
including $P(l, m, n)$ \cite[Remark~1, Page 28, Fig.~13]{IT15}.   Throughout of this paper, let $P(m, n)$ $=$ $P(1, m, n)$ (Fig.~\ref{1303}).  

\begin{figure}[h!]
\includegraphics[width=3cm]{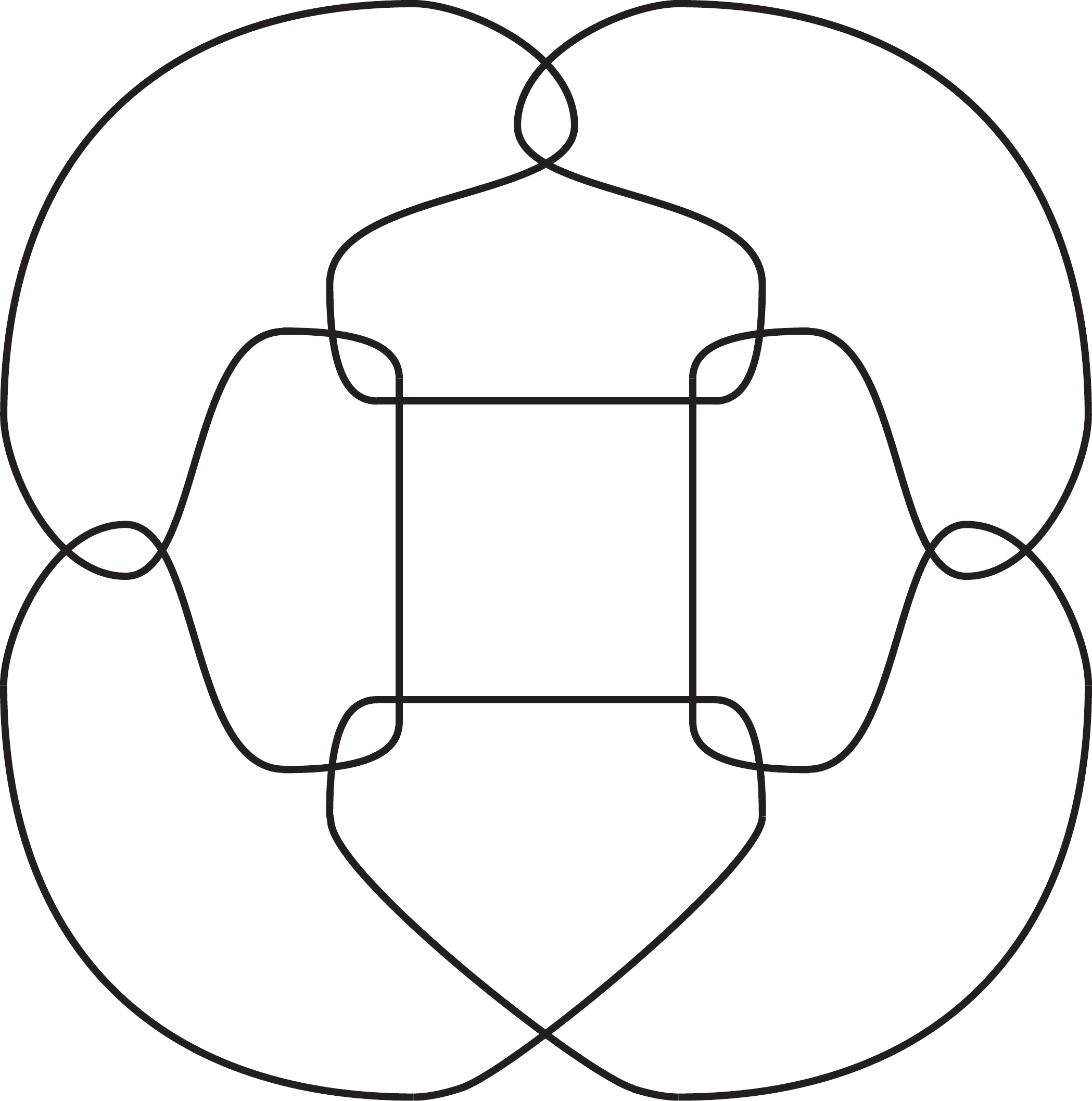}
\caption{Our example having 15 double points}\label{1302a}
\end{figure}

We call a deformation of type 2 decreasing double points a deformation of \emph{negative type} 2.  Then the next fact is well known to the experts.  
\begin{fact}[\cite{L}]\label{L_fact}
Every knot projection is related to the trivial spherical curve by a finite sequence of deformations of types 1, negative~2, and 3.
\end{fact}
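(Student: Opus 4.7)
I would prove the Fact by strong induction on the number $n$ of double points of $P$; the base case $n = 0$ is immediate, since $P$ is already the trivial spherical curve.

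For the inductive step, the plan is to produce either a type 1 deformation or a negative type 2 deformation that strictly decreases $n$, possibly after preparing $P$ by a finite sequence of type 3 deformations (which leave $n$ unchanged). The main tool is Euler's formula applied to $P$ viewed as a $4$-valent graph embedded in $S^2$: with $V = n$ and $E = 2n$ we have $F = n + 2$, and $\sum_f \deg(f) = 2E = 4n$ gives an average face degree $4n/(n+2) < 4$. Hence $P$ must possess some face of degree at most $3$.

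If $P$ has a monogon, a single type 1 deformation removes it, decreasing $n$ by $1$, and the induction hypothesis completes the proof. If $P$ has no monogon but has a bigon, a negative type 2 deformation removes the bigon, decreasing $n$ by $2$, and we again invoke induction. These two cases dispose of the problem almost immediately.

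The main obstacle is the remaining case, in which every face of degree at most $3$ is a triangle. Here my strategy is to apply a finite sequence of type 3 deformations, none of which change $n$, so as to eventually create a bigon, at which point the previous case applies. The technical content is a combinatorial termination argument: one introduces a secondary complexity on $P$---for example, the lexicographic pair consisting of the number of triangular faces together with the minimum dual-graph distance from a triangle to a face of degree at least $4$---and shows that a carefully chosen type 3 deformation on a triangle adjacent (through the dual graph) to a larger face strictly decreases this complexity. Iterating yields a bigon after finitely many type 3 deformations, reducing the situation to the second case. Making this selection of the triangle explicit, and verifying termination rigorously, is the delicate combinatorial heart of the proof.
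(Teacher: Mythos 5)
The paper does not actually prove this Fact---it is quoted from Lickorish---so there is no internal proof to compare against; I can only assess your argument on its own terms. The easy part of your reduction is fine: Euler's formula does force a face of degree at most $3$; a monogon face is removed by a type~1 deformation; and, once monogons are excluded, any bigon face has two distinct double points (a degenerate one would force a loop edge and hence a monogon face) and is removed by a negative type~2 deformation. The trouble is that the remaining case---no monogons and no bigons---really occurs for knot projections (the standard $8$-crossing Turk's-head projection of $8_{18}$ has eight triangular faces, two quadrilaterals, and nothing smaller), and that case is the entire content of the statement. Your treatment of it is not a proof.

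Concretely, a type~3 deformation on a triangular face $T$ decreases by one the degree of each of the three faces meeting $T$ along an edge and increases by one the degree of each face meeting $T$ only at a corner. So if some triangle is edge-adjacent to another triangle you win in one move; but if every triangle is edge-adjacent only to faces of degree at least $4$, the move turns each adjacent quadrilateral into a \emph{new} triangle while the flipped $T$ survives as a triangle still adjacent to large faces. Hence the number of triangular faces can strictly increase and the dual distance from a triangle to a face of degree at least $4$ need not change, so the lexicographic complexity you propose does not decrease; the termination argument you defer as ``delicate'' is in fact the whole theorem, and the specific invariant you offer for it fails. The known ways to close this gap (de Graaf--Schrijver, Hass--Scott, or the classical innermost-loop argument) proceed differently: travel along the curve until the first repeated double point to obtain an embedded sub-loop, let $D$ be a disk it bounds, and induct on the amount of the curve inside $D$, clearing $D$ by type~3 and negative type~2 deformations before removing the loop by a type~1 or negative type~2 deformation. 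You would need to carry out an argument of that kind, or exhibit a potential function that provably decreases, before this proposal can be accepted.
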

In this paper, for a given knot projection $P$, we introduce \emph{$\rii$ number} that 
is the minimum number of deformations of negative type~2 among sequences, each of which consists of deformations of type~1,  negative type~2, and type~3.  
This number is denoted by $\rii(P)$.    
By definition, \"{O}stlund conjecture implies $\rii$ number always would be zero.   However, Hagge-Yazinski Theorem implies that $\rii$ number is nontrivial.   
In Section~\ref{application}, we study $2$ classes of knot projections called pretzel knot projections, and two-bridge knot projections, and show that every such knot projection $P$  satisfies $\rii(P)=0$ (Propositions~\ref{proposition1} and \ref{proposition2}).     
Theorem~\ref{main1} implies that for any integer $m$ $(\ge 1)$, there exists $P$ such that $\rii(P)=m$.  

Two knot projections are \emph{(1, 3) homotopic} if they are related by finite deformations of types 1 and 3 and ambient isotopies.  The relation becomes an equivalence relation and is called {\it{(1, 3) homotopy}}.  
\begin{proposition}\label{prop}
Let $P$ be a knot projection.  The number $\rii(P)$ is an invariant under (1, 3) homotopy. 
\end{proposition}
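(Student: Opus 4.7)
The plan is to exploit the fact that the definition of $\rii(P)$ only counts negative type 2 moves, so prepending or appending type 1 and type 3 moves (and ambient isotopies) to a minimizing sequence does not change the count. The proof will be a short two-direction inequality argument.

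First, suppose $P$ and $P'$ are $(1,3)$-homotopic. Then by definition there is a finite sequence $\sigma$ from $P$ to $P'$ consisting of deformations of types 1 and 3 together with ambient isotopies. I would begin by observing that deformations of type 1 and type 3 are reversible as local replacements on the $2$-sphere, so $\sigma$ can also be read backward as a sequence from $P'$ to $P$ in the same class. This reversibility is what upgrades the concatenation step below into an equality.

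Next, I take an optimal sequence $\tau'$ realizing $\rii(P')$: that is, $\tau'$ goes from $P'$ to the trivial spherical curve, uses only deformations of types 1, negative 2, and 3, and contains exactly $\rii(P')$ deformations of negative type 2. The concatenation $\sigma \cdot \tau'$ is then a valid sequence from $P$ to the trivial spherical curve in the class permitted by the definition of $\rii$, and since $\sigma$ contains no type 2 deformations at all, the number of negative type 2 moves in $\sigma \cdot \tau'$ equals that in $\tau'$. Hence $\rii(P) \le \rii(P')$.

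Finally, the symmetric argument using $\sigma^{-1}$ and an optimizer for $\rii(P)$ gives $\rii(P') \le \rii(P)$, so $\rii(P) = \rii(P')$. There is no real obstacle here; the only thing to verify carefully is that the class of sequences used to define $\rii$ is closed under prepending arbitrary type 1 and type 3 deformations in either direction, which is immediate from Fact~\ref{L_fact} and the reversibility of these local moves.
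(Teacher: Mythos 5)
Your proposal is correct and follows essentially the same route as the paper's own proof: concatenate the $(1,3)$-homotopy from $P$ to $P'$ with an optimal sequence realizing $\rii(P')$ to get $\rii(P)\le\rii(P')$, then use the symmetry (reversibility of type 1 and 3 moves) for the reverse inequality. The only difference is that you make the reversibility of the moves explicit, which the paper leaves implicit in the phrase ``the argument is symmetric.''
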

\begin{theorem}\label{main1} 
For positive integers $m \ge 1$, and $n \ge 4$, let $P(m, n)$ be a knot projection, as in Fig.~\ref{1303}.   
Then, $\rii(P(m, n)) = m$ for any $(m, n)$. 
\end{theorem}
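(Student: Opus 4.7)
The plan is to prove $\rii(P(m,n)) = m$ by establishing two inequalities: the upper bound $\rii(P(m,n)) \le m$ constructively, and the lower bound $\rii(P(m,n)) \ge m$ via an auxiliary $(1,3)$-homotopy invariant.

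For the upper bound, I would exhibit an explicit simplification sequence using exactly $m$ negative type~$2$ moves. The projection $P(m,n)$ from \cite{IT15} appears to be built inductively from $m$ ``twist-like blocks'' sitting on an $n$-double-point frame. I would show that a single negative type~$2$ move, together with auxiliary type~$1$ and type~$3$ moves (which do not count towards $\rii$), reduces $P(m,n)$ to $P(m-1,n)$. Iterating $m$ times reduces $P(m,n)$ to $P(0,n)$, and the remaining frame $P(0,n)$ should be $(1,3)$-homotopic to the trivial spherical curve by a direct combinatorial argument (the condition $n \ge 4$ presumably ensures enough room to untangle the frame using only type~$1$ and type~$3$ moves, consistent with the simplification philosophy used for pretzel and two-bridge projections in Propositions~\ref{proposition1} and \ref{proposition2}). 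Summing the moves yields a simplification with exactly $m$ negative type~$2$ deformations, so $\rii(P(m,n)) \le m$.

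For the lower bound, I would construct (or import from \cite{IT15}) an integer-valued invariant $\varphi$ of knot projections with the following four properties: (i) $\varphi$ is invariant under deformations of types~$1$ and~$3$; (ii) $\varphi$ changes by at most $1$ under any single deformation of negative type~$2$; (iii) $\varphi(P(m,n)) \ge m$; and (iv) $\varphi$ vanishes on the trivial spherical curve. Given these, if a sequence of type~$1$, negative type~$2$, and type~$3$ moves takes $P(m,n)$ to the trivial curve, then the number of negative type~$2$ moves is at least $\varphi(P(m,n)) - \varphi(\text{trivial}) \ge m$, proving $\rii(P(m,n)) \ge m$. A natural candidate for $\varphi$ is a Gauss-diagram-style invariant counting signed subconfigurations of chords of the ``Hagge-Yazinski type'' that already witness the failure of \"{O}stlund's conjecture; by construction each of the $m$ blocks in $P(m,n)$ contributes precisely one such subconfiguration, giving $\varphi(P(m,n)) = m$.

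The main obstacle will be the lower bound, and more specifically the verification of properties (i) and (ii) for $\varphi$. Type~$3$ invariance is typically the most delicate constraint for any Gauss-diagram counting invariant, because type~$3$ moves permute nearby chord endpoints and can create or destroy subconfigurations of the targeted type; the signed count must be calibrated so that these creations and destructions exactly cancel. Similarly, a single negative type~$2$ move removes two mutually linked chords, so one must check that no more than one counted subconfiguration disappears at a time. Once such an invariant is set up correctly, the evaluation $\varphi(P(m,n)) = m$ should follow by a direct count of the $m$ block-subconfigurations, and the proof is complete by combining the two bounds.
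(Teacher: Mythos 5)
Your overall architecture (an explicit simplification for the upper bound plus a monovariant for the lower bound) matches the shape of the paper's argument, but both halves of your proposal leave the essential work undone, and in one place the asserted step is doubtful.

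For the upper bound, the step ``a single negative type~2 move together with type~1 and~3 moves reduces $P(m,n)$ to $P(m-1,n)$'' is asserted without justification, and it is not clear it can work: $P(m,n)$ has $2n$ twist regions, each containing $2m$ mutual crossings of strands~1 and~2, so one negative type~2 move reduces only one of these regions to $2m-2$ crossings while leaving the other $2n-1$ untouched; converting that into $P(m-1,n)$ would require redistributing crossings among all the boxes by $(1,3)$ moves, which is exactly the kind of manoeuvre the lower-bound analysis shows is obstructed. The paper's route is different and concrete: it spends all $m$ negative type~2 moves on a \emph{single} box, completely erasing its $2m$-crossing twist, and then shows (Lemma~\ref{lem4}, the replacements $T(2k-1)$ and $T(2k)$) that once one link of the chain is broken, the remaining $2n-2$ boxes can be dissolved one crossing at a time purely by $(1,3)$ homotopy, finishing with $2m$ type~1 moves. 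You would need either to prove your $P(m,n)\to P(m-1,n)$ reduction or to adopt such a ``break one box first'' strategy.

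For the lower bound, your axioms (i)--(iv) for $\varphi$ are exactly the right shape of argument, but the invariant is never constructed, and you yourself identify RIII-invariance of a signed Gauss-diagram count as the hard point --- that is precisely where the content of the theorem lives, so leaving it as a ``natural candidate'' is not a proof. The paper does not use a chord-diagram count at all; it defines the $(2m,2n)$ \emph{box property} (a geometric normal form: $2n$ boxes, each containing a $(3,3)$-tangle whose strands~1 and~2 meet in at least $2m$ double points, with no double points outside the boxes) and proves by induction on the length of the deformation sequence (Claim~\ref{claim_A}) that type~1 and type~3 moves preserve this property after retaking boxes, while a negative type~2 move degrades it by at most one step, from $(2m,2n)$ to $(2m-2,2n)$ (Lemma~\ref{lem5}). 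The quantity ``largest $m$ for which the box property holds'' plays the role of your $\varphi$, but its $(1,3)$-invariance is established by a Hagge--Yazinski-style case analysis of where a $1$-gon or $3$-gon can sit relative to the boxes, not by calibrating signs in a count. Without that construction, or an actual RIII-invariant counting invariant in hand, the inequality $\rii(P(m,n))\ge m$ remains unproved.
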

\begin{figure}[h!]
\includegraphics[width=12cm]{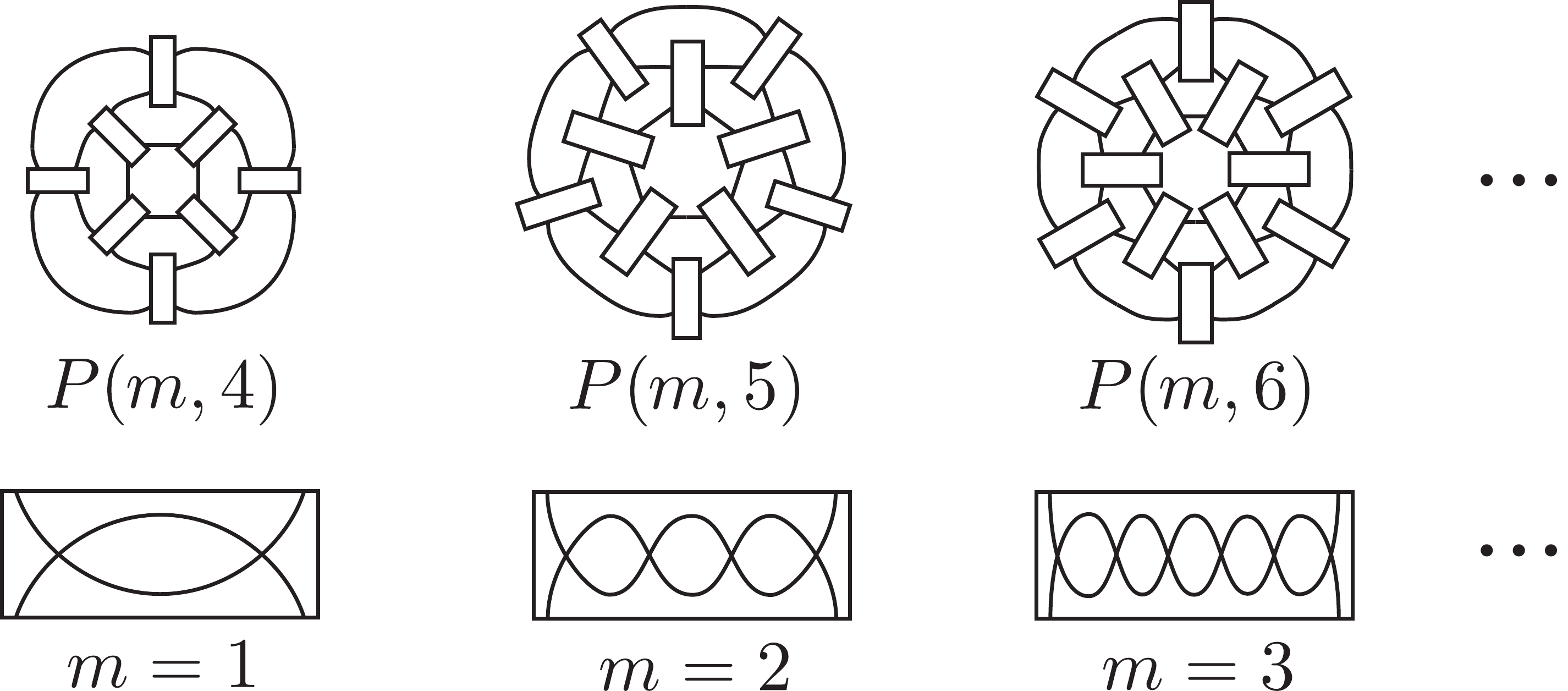}
\caption{$P(m, n)$.  Note that $P(1, 4)$ $=$ $P_{HY}$.}\label{1303}
\end{figure}
\begin{corollary}
There exists infinitely many (1, 3) homotopy classes of knot projections.  
\end{corollary}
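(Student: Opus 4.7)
The plan is to combine Proposition~\ref{prop} with Theorem~\ref{main1} to produce an infinite family of knot projections whose (1,3) homotopy classes are pairwise distinct, detected by the invariant $\rii$.

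First I would recall from Proposition~\ref{prop} that $\rii$ descends to a well-defined function on (1,3) homotopy classes; equivalently, if two knot projections $P$ and $P'$ satisfy $\rii(P) \neq \rii(P')$, then they cannot be (1,3) homotopic. Hence it suffices to exhibit knot projections realising infinitely many distinct values of $\rii$.

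Next I would fix $n = 4$ (any $n \ge 4$ works equally well) and consider the family $\{P(m, 4)\}_{m \ge 1}$ from Fig.~\ref{1303}. By Theorem~\ref{main1}, $\rii(P(m, 4)) = m$, so the values $\rii(P(1,4)), \rii(P(2,4)), \rii(P(3,4)), \dots$ are pairwise distinct positive integers. Therefore the knot projections $P(1,4), P(2,4), P(3,4), \dots$ represent pairwise distinct (1,3) homotopy classes, which establishes the corollary.

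There is no real obstacle here beyond invoking the two stated results; the statement is a direct logical consequence of Proposition~\ref{prop} (which provides the invariance) together with Theorem~\ref{main1} (which provides the realisation of arbitrarily large values of $\rii$). If one wishes, one can additionally include the trivial spherical curve, for which $\rii = 0$, to make the enumeration of classes start from zero.
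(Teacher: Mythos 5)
Your proposal is correct and is exactly the argument the paper intends: the corollary is stated as an immediate consequence of Proposition~\ref{prop} (invariance of $\rii$ under (1,3) homotopy) together with Theorem~\ref{main1} (realisation of every value $m \ge 1$ by $P(m,n)$). Nothing further is needed.
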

For a proof of Theorem~\ref{main1}, see Section~\ref{sec_proof}.  

\section{Preliminaries}
\begin{definition}[$\rii$ number $\rii(P)$]
Let $P$ be a knot projection.  The \emph{$\rii$ number}  
is the minimum number of deformations of negative type~2 among sequences, each of which consists of deformations of type~1, negative type~2, and type~3.  
The number is denoted by $\rii(P)$.  
\end{definition}
\begin{definition}[($3, 3$)-tangle]
An unoriented ($3, 3$)-tangle is the image of a generic immersion of $3$ arcs into $[0, 1] \times [0, 1]$ such that:
\begin{itemize}
\item The boundary points of the arcs map bijectively to $6$ points \[ \left \{\frac{1}{4}, \frac{2}{4}, \frac{3}{4} \right \} \times \{ 1 \}, \left \{ \frac{1}{4}, \frac{2}{4}, \frac{3}{4} \right \} \times \{ 0 \}.\]  
\item Near the endpoints, the arcs are perpendicular to the boundary $[0, 1]$.  
\end{itemize}
In a (3, 3)-tangle, we call an image of the map of a single arc a {\it{strand}}.  
\end{definition} 
\begin{definition}
Let $P$ be a knot projection and $F$ the closure of a connected component in $S^2 \setminus P$.  Let $n$ be a positive integer.  Suppose that the double points of $P$ that lie on $\partial F $ are removed, the reminder consists of $n$ connected components, each of which is homeomorphic to an open interval.  Then, $\partial F$ is called an \emph{$n$-gon}.  When we do not specify $n$, an $n$-gon is called a \emph{polygon}.  
\end{definition} 
\begin{notation}[\cite{FHIKM}]\label{notation1}
Let $P$ and $P'$ be two knot projections that are equivalent under deformations of type~1 and type~3, i.e., there exists a finite sequence of knot projections $P$ $=$ $P_0$, $P_1$, \ldots, $P_m$ $=$ $P'$, where $P_i$ is obtained from $P_{i-1}$ by a deformation of type~1 or type~3.  Then, $Op_i$ denotes the deformation from $P_{i-1}$ to $P_i$, and the setting are expressed by using the notation: 
\[
P = P_0 \stackrel{Op_1}{\to} P_1 \stackrel{Op_2}{\to} \dots \stackrel{Op_m}{\to} P_m = P'.  
\]
\end{notation}

\section{Proofs of Proposition~\ref{prop} and Theorem~\ref{main1}}\label{sec_proof}
Before starting the proof, we explain our plan of the proof.  
\begin{itemize}
\item We show that $\rii(P)$ is invariant under (1, 3) homotopy.  \label{pf1}
\item We show that  $P(m, n) \ge m$.   Most of the part is obtained from a similar proof of Hagge-Yazinski Theorem \cite{HY} or \cite{IT15}.   Therefore, a reader who is familiar to \cite{HY} can skip this part except for Section~\ref{36}.   Since it is elementary to prove it, we prove it here.  We use the terminology used in \cite{HY}.  \label{pf2}
\item We show that $P(m, n) \le m$.  We introduce new techniques and show this part.  \label{pf3}
\end{itemize}

\noindent $\bullet$ {\bf{Proof of Proposition~\ref{prop}.}}  
Let $P$ and $P'$ be a pair of (1, 3) homotopic knot projections.    
Let $m=\rii(P')$, hence, there exists a sequence of deformations consisting of deformations of type~1, negative type~2, or type~3, and it contains exactly $m$ deformations of negative type~2. By combining the deformations from $P$ and $P'$, and $P'$ to the trivial spherical curve, we obtain a sequence from $P$ to the trivial spherical curve which contains exactly $m$ deformities of negative type~2.  This shows that $\rii(P) \le \rii(P')$.  Since the argument is symmetric, we see that $\rii(P) \ge \rii(P')$ holds, too.  Hence, $\rii(P) = \rii(P')$.   

\subsection{Proof of $P(m, n) \ge 1$.}    
For any $P(m, n)$, there exist $2n$ boxes  such that the intersections of $P(m, n)$ and $2n$ boxes are $(3, 3)$-tangles, and there are no double points outside the boxes (Figs.~\ref{1303} and \ref{1305}).    Each box together with the portion of $P(m, n)$  contained in the box is equivalent to $[0, 1] \times [0, 1]$ corresponding to a (3, 3)-tangle.  For example, for the case $m=1$ and $n=4$, $8$ boxes are shown in Fig.~\ref{1305}.  
Each $P(m, n)$ satisfies the following two  conditions:
\begin{figure}[h!]
\includegraphics[width=8cm]{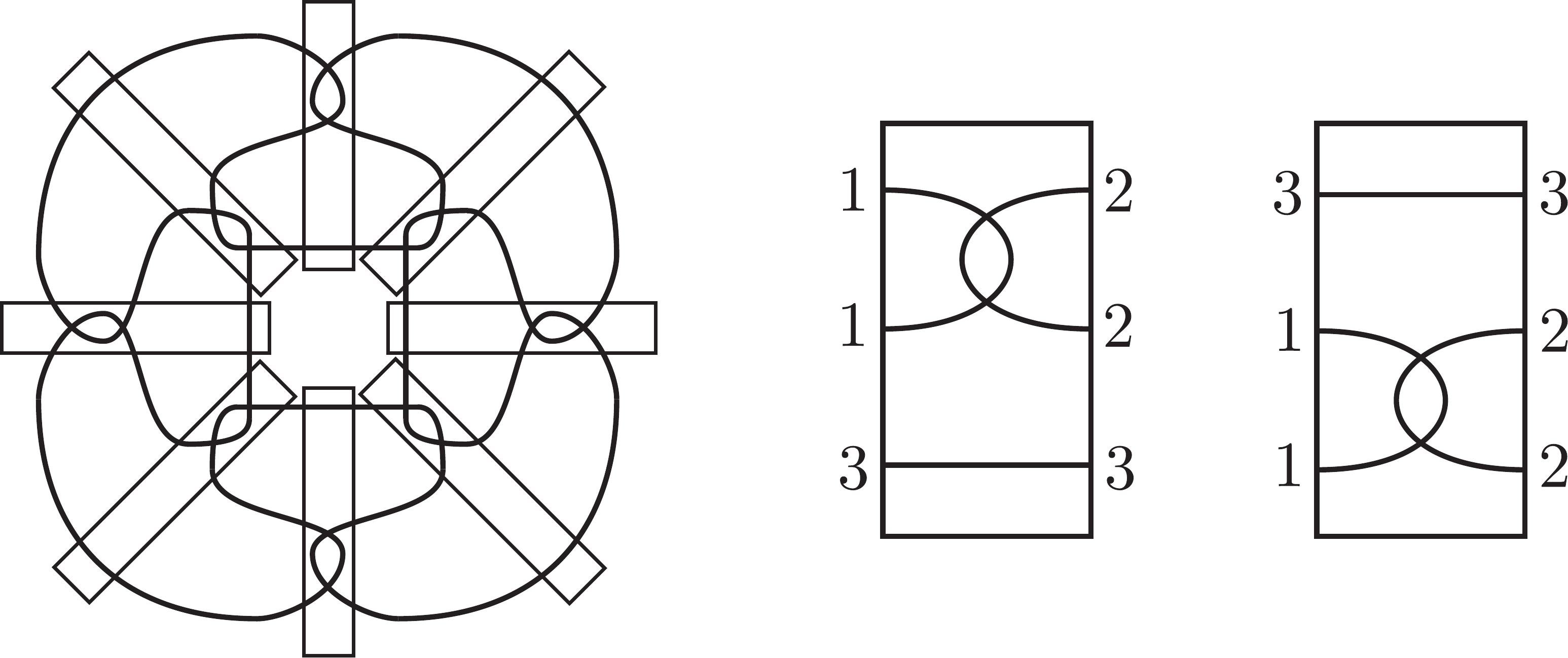}
\caption{$P(1, 4)$ with boxes (left), strands $1$, $2$, and $3$ (right)}\label{1305}
\end{figure}
\begin{enumerate}
\item No double points are placed outside the boxes. Three arcs connect two adjacent boxes concentrically.  
There exist exactly two polygons, 
each of which has at least $n$ sides partially outside boxes.  \label{r3}
\item If we fix our gazing direction from infinity, which is selected as shown Figs.~\ref{1303} and \ref{1305}, we define the {\it{left-side}} and {\it{right-side}} of each box.  In each box, strand~1 ($2$,~resp.) is a strand that begins and ends on the left-side (right-side,~resp.).  In each box, strand~3 has one endpoint on the left-side and another endpoint the right-side.  There exist $2n$ pairs of strand~1 and strand $2$; for each pair, strand~1 and strand~2 intersect at exactly $2m$ double points.  
Strand~1 of a pair cannot intersect strand~2 belonging to a different pair.   
\label{r1}

\end{enumerate}

Let $m$ and $n$ be positive integers ($m \ge 1, n \ge 4$).  
In general, for a knot projection $Q$, we say that $Q$ satisfies ($2m, 2n$) \emph{box property} if there exist $2n$ boxes $B_i$ ($1 \le i \le 2n$) in $S^2$, as shown in Fig.~\ref{1305b}, satisfying the following (A) and (B).  
\begin{enumerate}
\item[(A)] There exist no double points of $Q$ in $S^2 \setminus (B_1 \cup B_2 \cup \dots \cup B_{2n})$, and $Q \cap (S^2 \setminus (B_1 \cup B_2 \cup \dots \cup B_{2n}))$ consists of $3 \times 2n$ simple arcs as in Fig.~\ref{1305b}.  \label{R3}
\begin{figure}[h!]
\includegraphics[width=4cm]{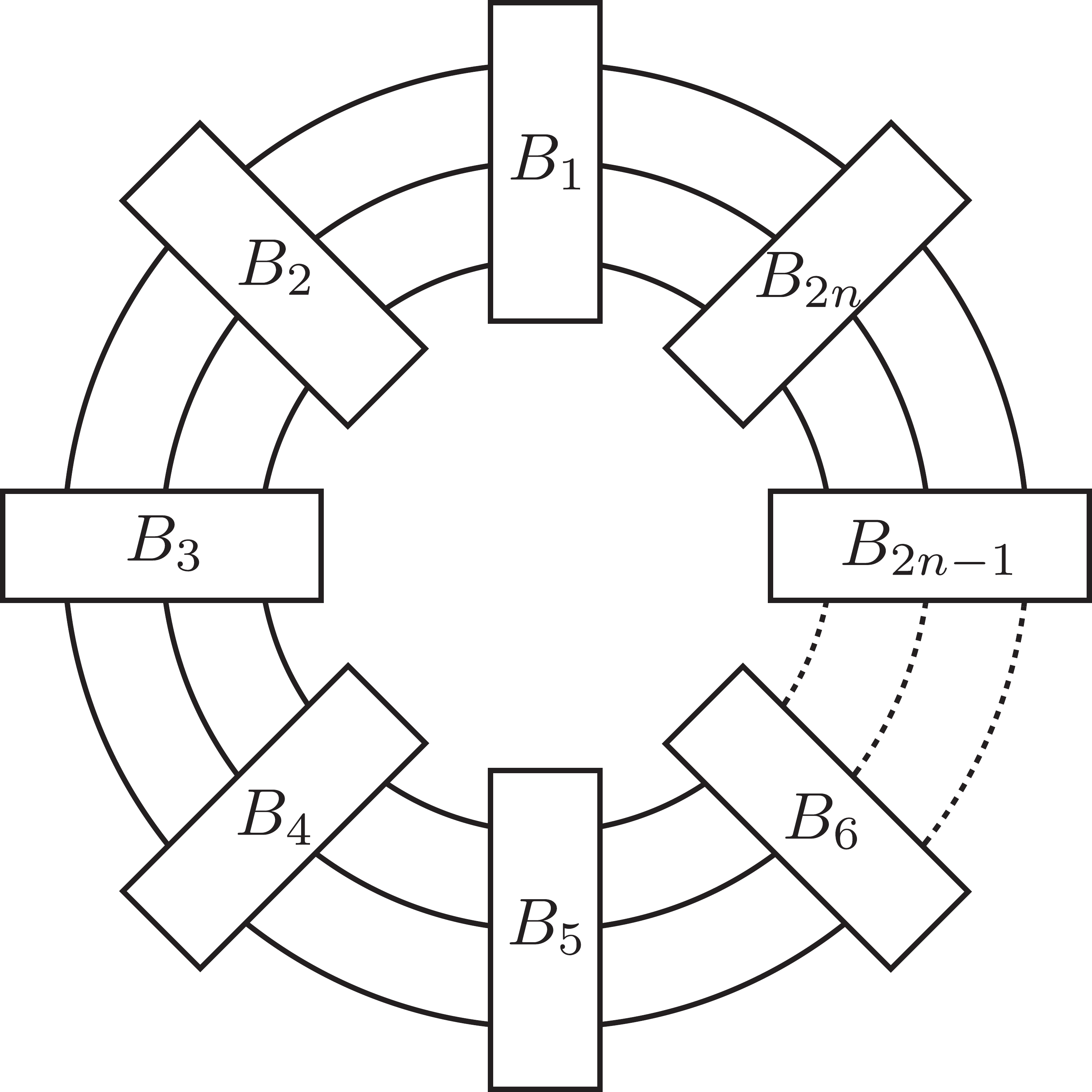}
\caption{Boxes $B_1$, $B_2$, \dots , $B_{2n}$}\label{1305b}
\end{figure}
\item[(B)] For each $i$ ($1 \le i \le 2n$), $Q \cap B_i$ is a (3, 3)-tangle that is the union of three immersed arcs.  Then it satisfies the following condition.  \label{R1}
\begin{itemize}
\begin{figure}[h!]
\includegraphics[width=10cm]{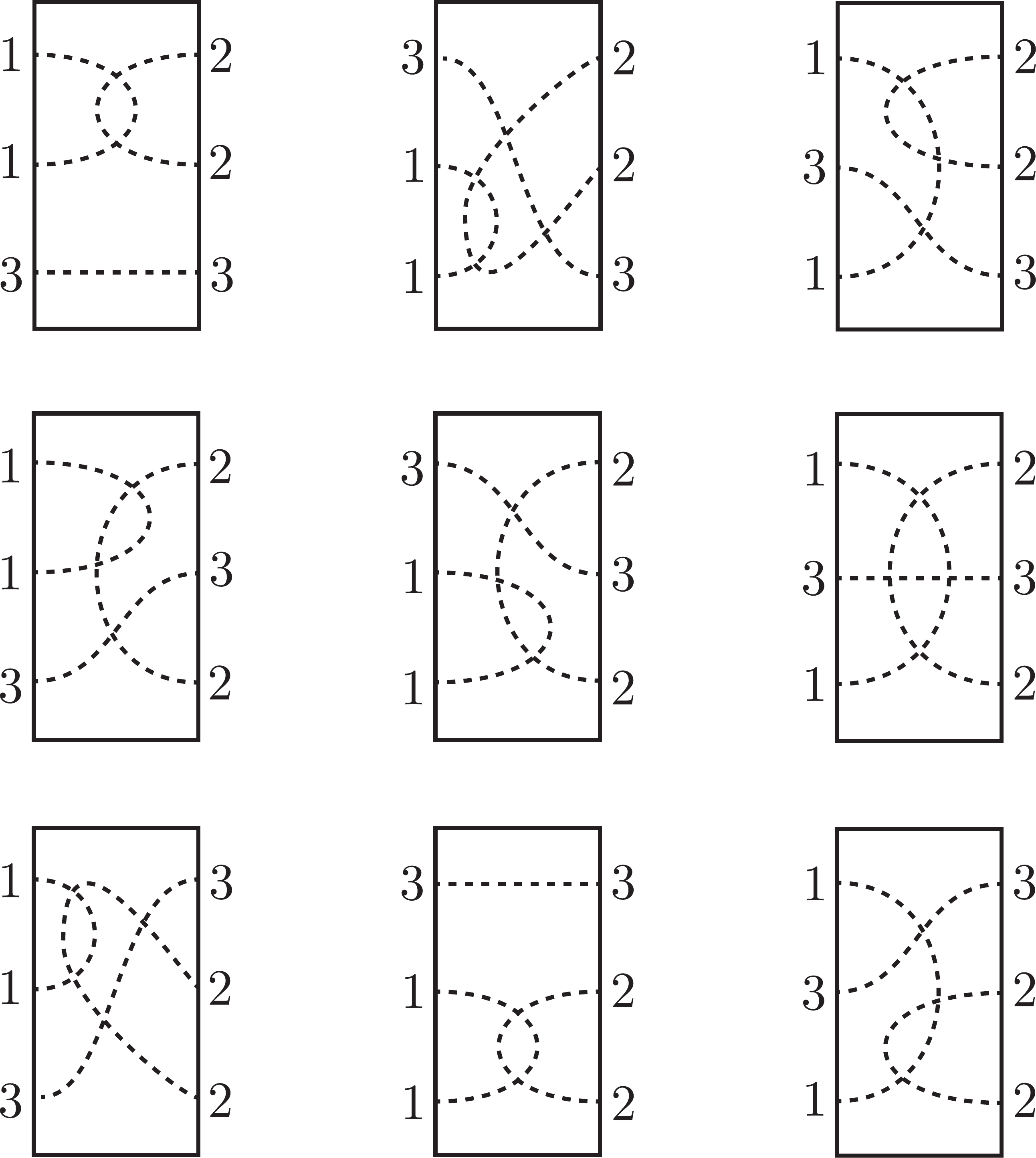}
\caption{The types of $B_i$}\label{16}
\end{figure}
\item The endpoints of the three arcs are located on $\partial B_i$ as in one of the figures of  Fig.~\ref{16}, where we name the immersed arcs 1, 2, and 3 as in Fig.~~\ref{16}.  Further, there exist at least $2m$ double points formed by a subarc of 1, and a subarc of 2.  
\end{itemize}  
\end{enumerate}

\noindent $\bullet$ {\it{Proof of $\rii(P(m, n)) \ge 1$.}}
We consider an inductive proof with respect to the number of deformations, type~1 or 3, applied to $P(m, n)$.  This induction proves  Claim~\ref{claim_A}, which implies that $P(m, n)$ cannot be (1, 3) homotopic to the trivial spherical curve; that is, $\rii(P(m, n)) \ge 1$.  Recall Notation~\ref{notation1}.  
\begin{claim}\label{claim_A}
Let $Q$ be a knot projection satisfying a $(2m, 2n)$ box property.  Then, for each sequence of knot projections   
\[
Q = Q_0 \stackrel{Op_1}{\to} Q_1 \stackrel{Op_2}{\to} \dots \stackrel{Op_r}{\to} Q_r 
\]
such that each $Op_i$ $(1 \le i \le r)$ is of type 1 or 3, we have:

by retaking the boxes if necessary, each $Q_i$ $(1 \le i \le r)$ satisfies $(2m, 2n)$ box property.  
\end{claim}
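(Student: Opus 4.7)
The plan is to prove Claim~\ref{claim_A} by induction on $r$. The base case $r=0$ holds by hypothesis. For the inductive step, I suppose $Q_{i-1}$ satisfies the $(2m,2n)$ box property with boxes $B_1,\dots,B_{2n}$, and that $Q_i$ is obtained from $Q_{i-1}$ by a single deformation $Op_i$ supported in a small embedded disk $D\subset S^2$. The objective is to produce new boxes $B_1',\dots,B_{2n}'$ witnessing the property for $Q_i$.

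I first isotope so that $D$ lies either in the interior of a single $B_j$, or in a single ``channel'' of $S^2\setminus(B_1\cup\dots\cup B_{2n})$ bounded by sides of two consecutive boxes and three parallel simple arcs of $Q_{i-1}$. If $Op_i$ is of type 3, then $D\cap Q_{i-1}$ contains three mutually crossing subarcs, which is impossible in a channel by (A), so $D$ must lie in some $B_j$, and I set $B_j':=B_j$ for all $j$. If $Op_i$ is of type 1 and $D\subset B_j$, I again set $B_j':=B_j$. If $Op_i$ is a type 1 birth occurring in a channel, I enlarge the neighboring box $B_j$ across $D$ along the single affected arc to obtain $B_j'$, keeping the other boxes unchanged; the new tangle in $B_j'$ is the old one with one extra self-crossing loop appended to one strand.

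It remains to verify (A) and (B) for the new configuration. Condition (A) is immediate: outside the (possibly enlarged) boxes, $Q_i$ still consists of $6n$ disjoint simple arcs connecting box boundaries, and no new double points appear outside. For (B), the six endpoints on $\partial B_j'$ and the matching between them induced by the strands are preserved, hence the type of $B_j'$ among those in Fig.~\ref{16} and the labels of strands 1, 2, 3 transfer uniquely from $B_j$. Moreover, a type 3 move preserves every pairwise intersection count between the three strands, while a type 1 move alters only the self-intersection count of a single strand by $\pm 1$; in either case the number of crossings between strands 1 and 2 is unchanged, so it remains at least $2m$.

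The main obstacle is the reduction carried out in the second paragraph, namely showing that $D$ may always be positioned so that the move sits cleanly inside one $B_j$ or inside one channel. This relies essentially on the rigid channel structure forced by (A) --- three pairwise disjoint simple arcs between consecutive boxes --- which rules out type 3 moves and type 1 deaths outside the boxes and restricts type 1 births outside the boxes to a single arc of a channel, where absorbing the move into the adjacent box is a controlled local enlargement of $\partial B_j$. Once this reduction is in place, the verification in the third paragraph completes the inductive step and thereby proves the claim.
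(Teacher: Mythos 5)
Your overall architecture (induction on $r$, case split by move type and by whether the move sits inside a box) matches the paper's, but the load-bearing step of your argument is asserted rather than proved, and as stated it is false. You claim that the supporting disk $D$ of $Op_i$ can always be isotoped to lie either inside a single box $B_j$ or inside a single channel, and you then conclude that a type 3 move must occur inside one box because three mutually crossing arcs cannot fit in a channel. This is a false dichotomy: the $3$-gon of a type 3 move need not lie in one box nor in one channel --- its three double points can be distributed with one in one box and two in an adjacent box, or in three different boxes, with the sides of the triangle running through the channels between them. These configurations genuinely occur and are the heart of the matter; the paper devotes its Cases 1 and 2 (Figs.~\ref{1312} and \ref{1313}) to them. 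In those cases one cannot avoid the issue by an isotopy of $D$; instead one must \emph{retake} the boxes (enlarge one or merge two across a channel) so that the triangle becomes contained in a single new box, and then check separately that the retaken boxes still satisfy (A) and (B) --- in particular that the endpoint positions on $\partial B_i$ and the strand matchings, hence the labels $1$, $2$, $3$ and the count of $1$--$2$ crossings, are preserved. Your proof never confronts this case, so the inductive step is incomplete exactly where the claim is nontrivial.

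A secondary gap: you dismiss type 1 deaths outside the boxes by appealing to ``the rigid channel structure forced by (A),'' but this needs an actual argument. The $1$-gon being removed can have its double point inside a box with its loop excursing into the complement, so one must analyze the region of the $1$-gon lying outside the boxes (one side versus two sides) and derive a contradiction from the strand structure of (B) --- e.g., a side connected to strand $1$ or $2$ would force the $1$-gon to contain at least two double points. The paper carries this out via Figs.~\ref{1308a} and \ref{1308b}; your proposal replaces it with an assertion. The type 1 birth case and the within-a-box case in your write-up are fine and agree with the paper.
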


First, for $Q_0$, it is clear that Claim~\ref{claim_A} holds.  Second, we suppose that Claim~\ref{claim_A} holds for $Q_{r-1}$ and prove it $Q_r$.     

\subsection{On a deformation of type~1 or type~3 occurring within a box}
Suppose that the $Op_{r}$ is a single deformation (type~1 or type~3) occurring entirely within a box.  This deformation fixes the endpoints of the strands, and thus Claim~\ref{claim_A} holds.   

\subsection{On a deformation of type~1 not occurring within a box and increasing double points}
Suppose that $Op_{r}$ is a single deformation of type~1 increasing double points.    
If the new $1$-gon produced by $Op_{r}$ is outside the boxes, by retaking the box by a sphere isotopy, as shown in Fig.~\ref{1307}, $Op_{r}$ is entirely within a box.  If the new $1$-gon produced by $Op_{r}$ is not completely outside the boxes, a similar modification works to retake the box.       
\begin{figure}[h!]
\includegraphics[width=10cm]{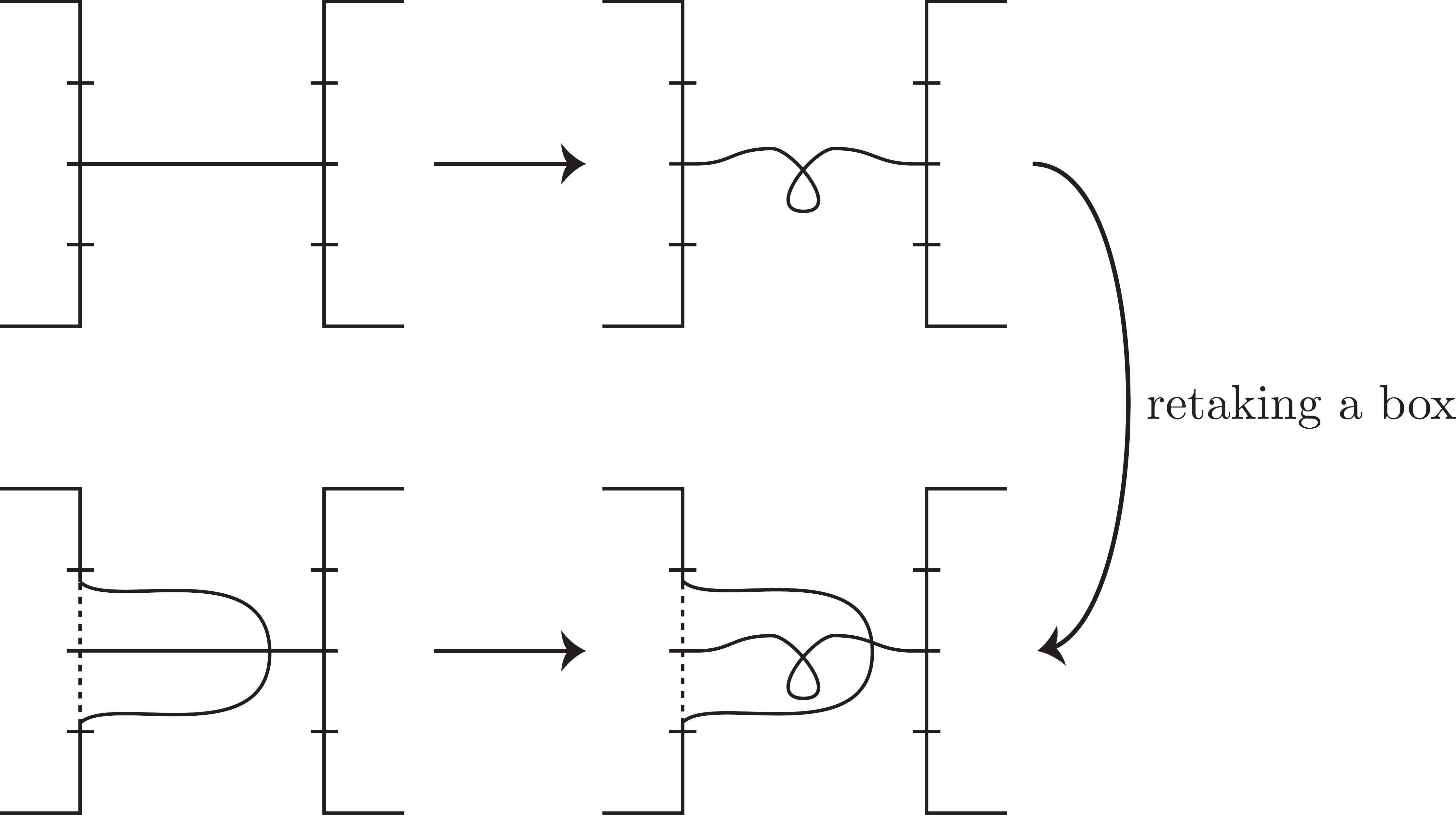}
\caption{Retaking a box}\label{1307}
\end{figure}
\subsection{On a deformation of type~1 not occurring within a box and decreasing double points}
Suppose that the $Op_{r}$ does not occur within a box.  Since $Op_r$ is a deformation of type~1 decreasing double points, there exists a $1$-gon to be removed in $Q_{r-1}$.  The two possibilities of appearing of a $1$-gon are considered as follows.
\begin{itemize}
\item Suppose that the $1$-gon contains a region having one side, as shown in Fig.~\ref{1308a}.  By the induction assumption of $P_{r-1}$, the region has at least four sides,  which is a contradiction. 
\begin{figure}
\includegraphics[width=5cm]{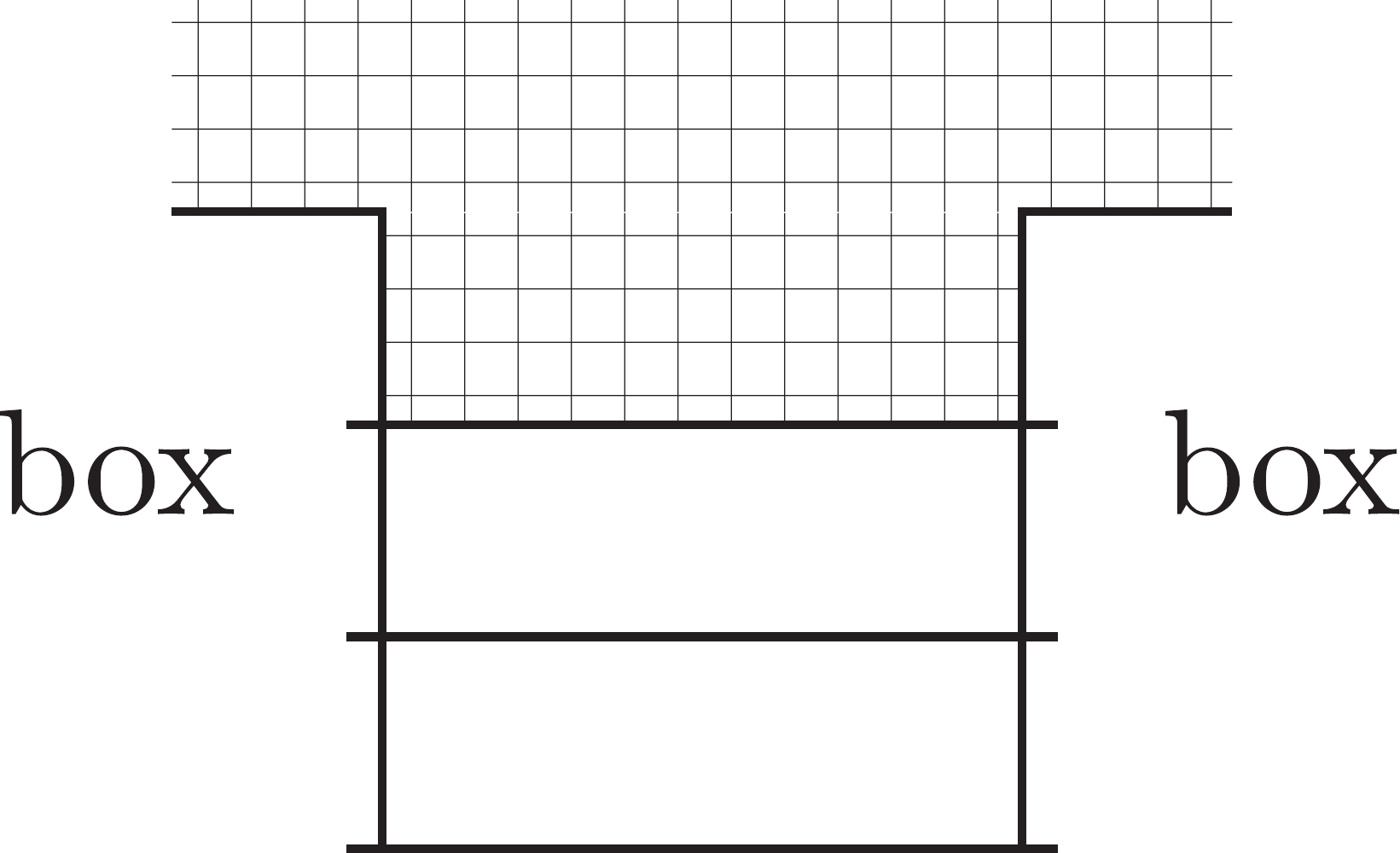}
\caption{A region having one side in $S^2 \setminus (B_1 \cup B_2 \cup \dots \cup B_{2n})$}\label{1308a}
\end{figure}
\item    
\begin{figure}
\includegraphics[width=5cm]{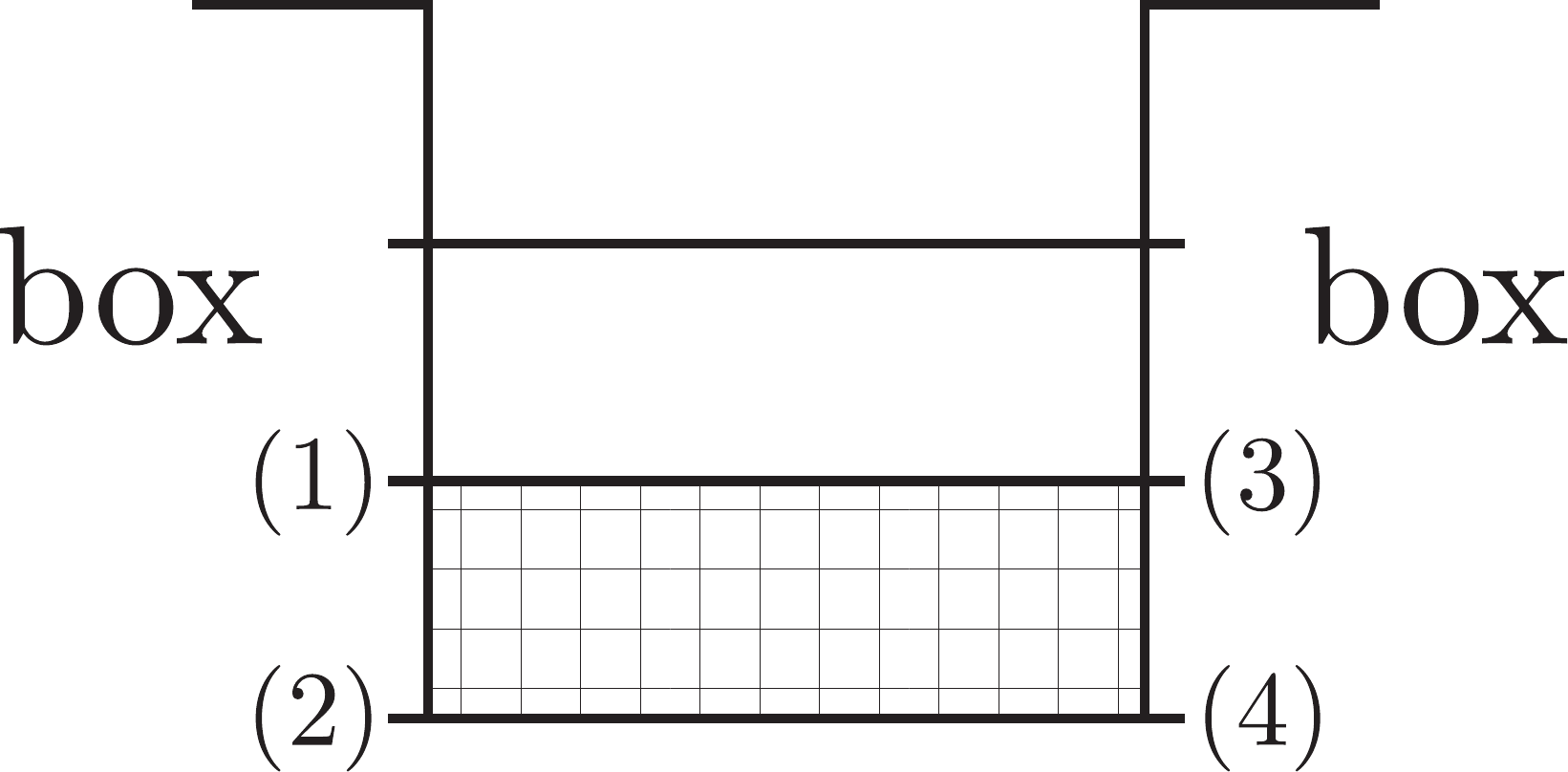}
\caption{Each of endpoints (1), (2), (3), and (4) connects to strand~1, 2, or 3.}\label{1308b}
\end{figure}
Suppose that there exists the $1$-gon  containing a region having two sides outside the boxes, as shown in Fig.~\ref{1308b}.  If one of the two sides is directly connected to either strand~1 or 2, then the $1$-gon has at least two double points, which is a contradiction (there is no $1$-gon with two double points).  If one of the two sides is directly connected to strand~3 that is connected strand~1 or 2 in the adjacent box, then this also implies a contradiction which is similar to the case above.     
\end{itemize}  
\subsection{On a deformation of type~3 not occurring within a box}
If $Op_{r}$ is a single deformation of type~3, we focus on the $3$-gon $T_{r-1}$ in $Q_{r-1}$ with respect to the single deformation of type~3, as shown in Fig.~\ref{1309}. 

\begin{itemize}
\item Suppose that $T_{r-1}$ contains a region having one side, as shown in Fig.~\ref{1308a}.  By the induction assumption of $Q_{r-1}$, the region has at least four sides,  which is a contradiction.    
\begin{figure}[h!]
\includegraphics[width=5cm]{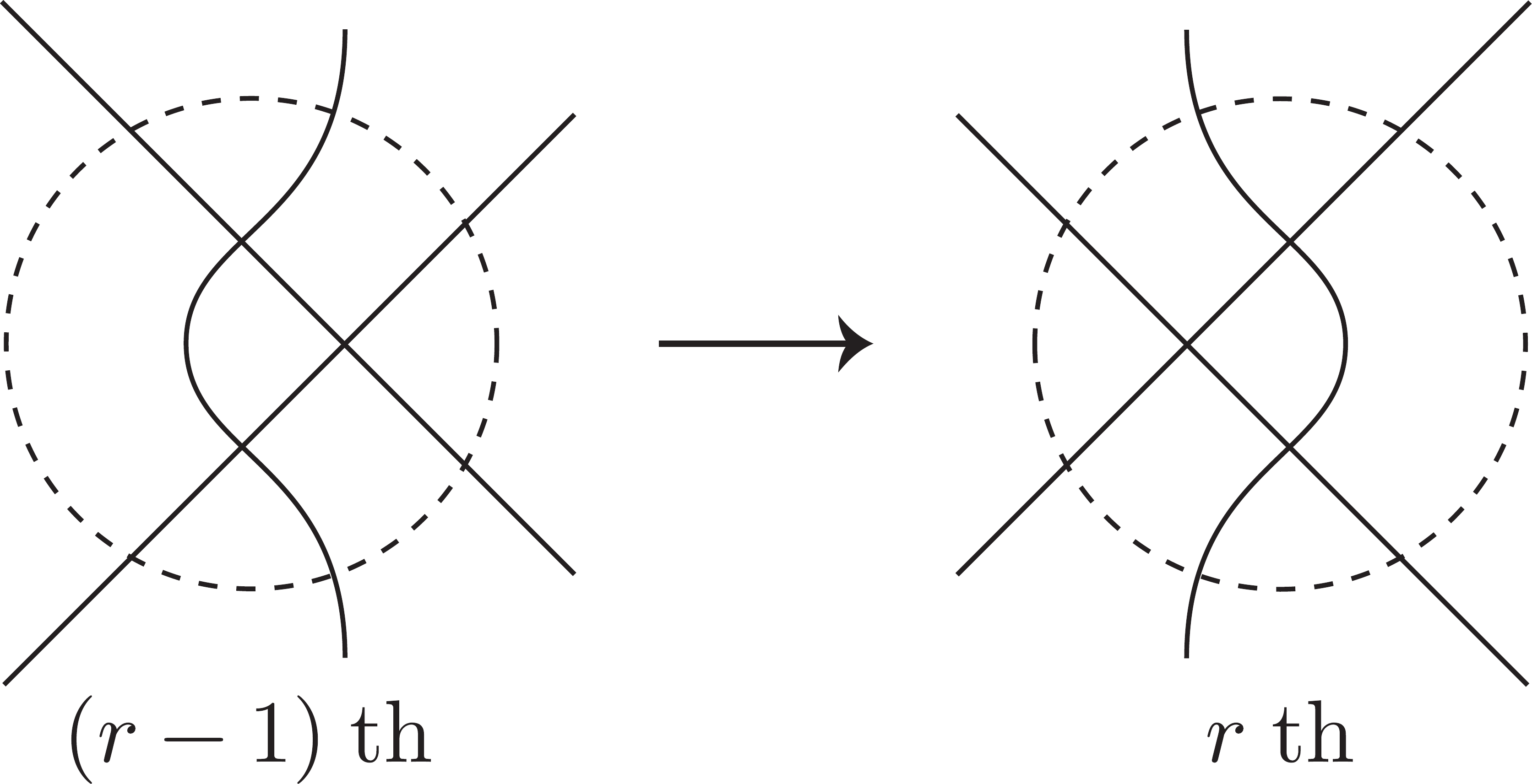}
\caption{$3$-gon $T_{r-1}$ in $Q_{r-1}$ with respect to a single deformation of type~3}\label{1309}
\end{figure}
\item Suppose that $T_{r-1}$ contains a region having two sides outside the boxes, as shown in Fig.~\ref{1310}.  
\begin{figure}[h!]
\includegraphics[width=3cm]{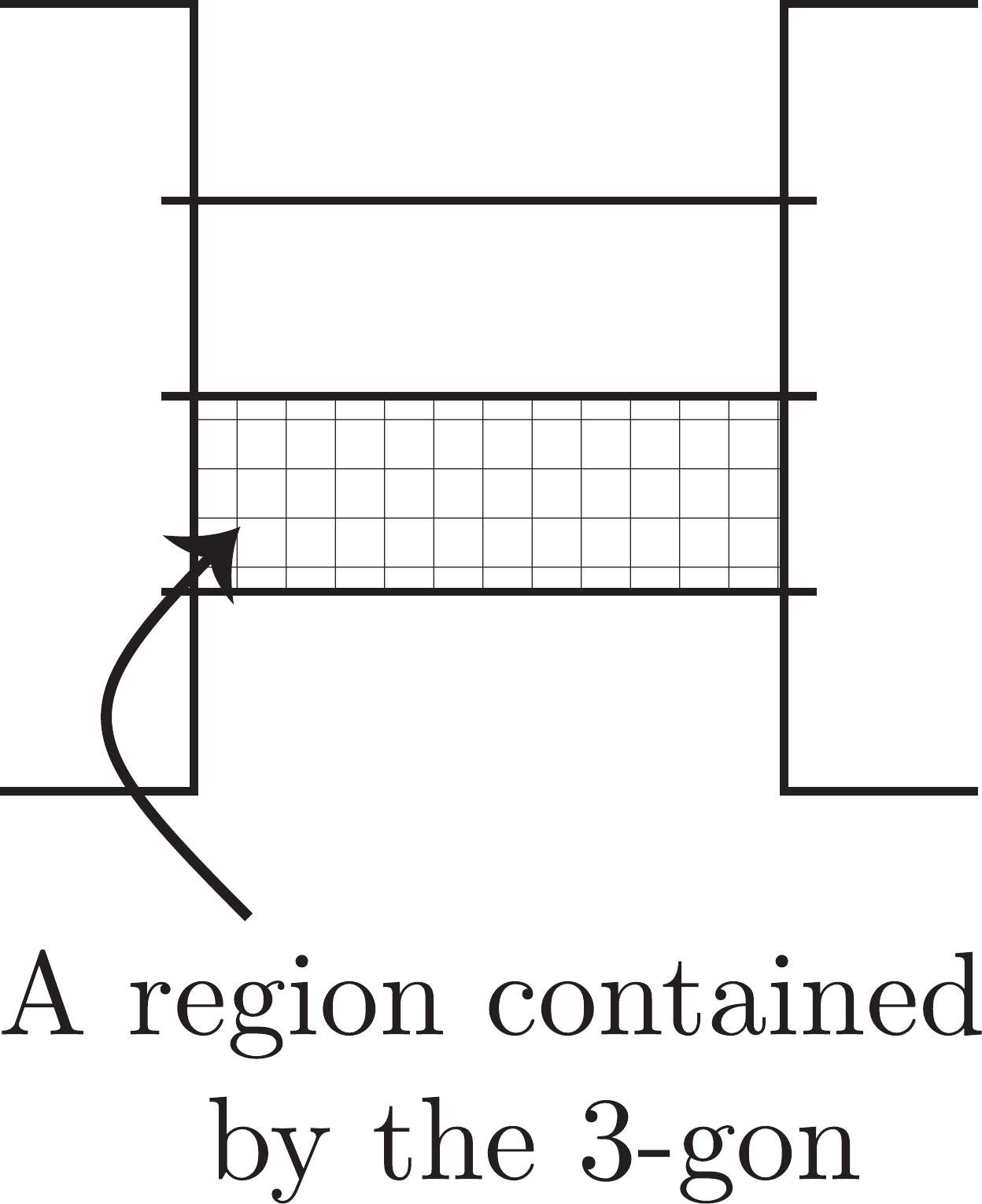}
\caption{A region surrounded by the $3$-gon $T_{r-1}$ appearing in $Q_{r-1}$}\label{1310}
\end{figure} 
\begin{enumerate}
\item Suppose that no double point of $T_{r-1}$ is  in a box, as shown in Fig.~\ref{1311}.  However, this situation is prohibited by (B) of ($2m, 2n$) box property.  
\begin{figure}[h!]
\includegraphics[width=5cm]{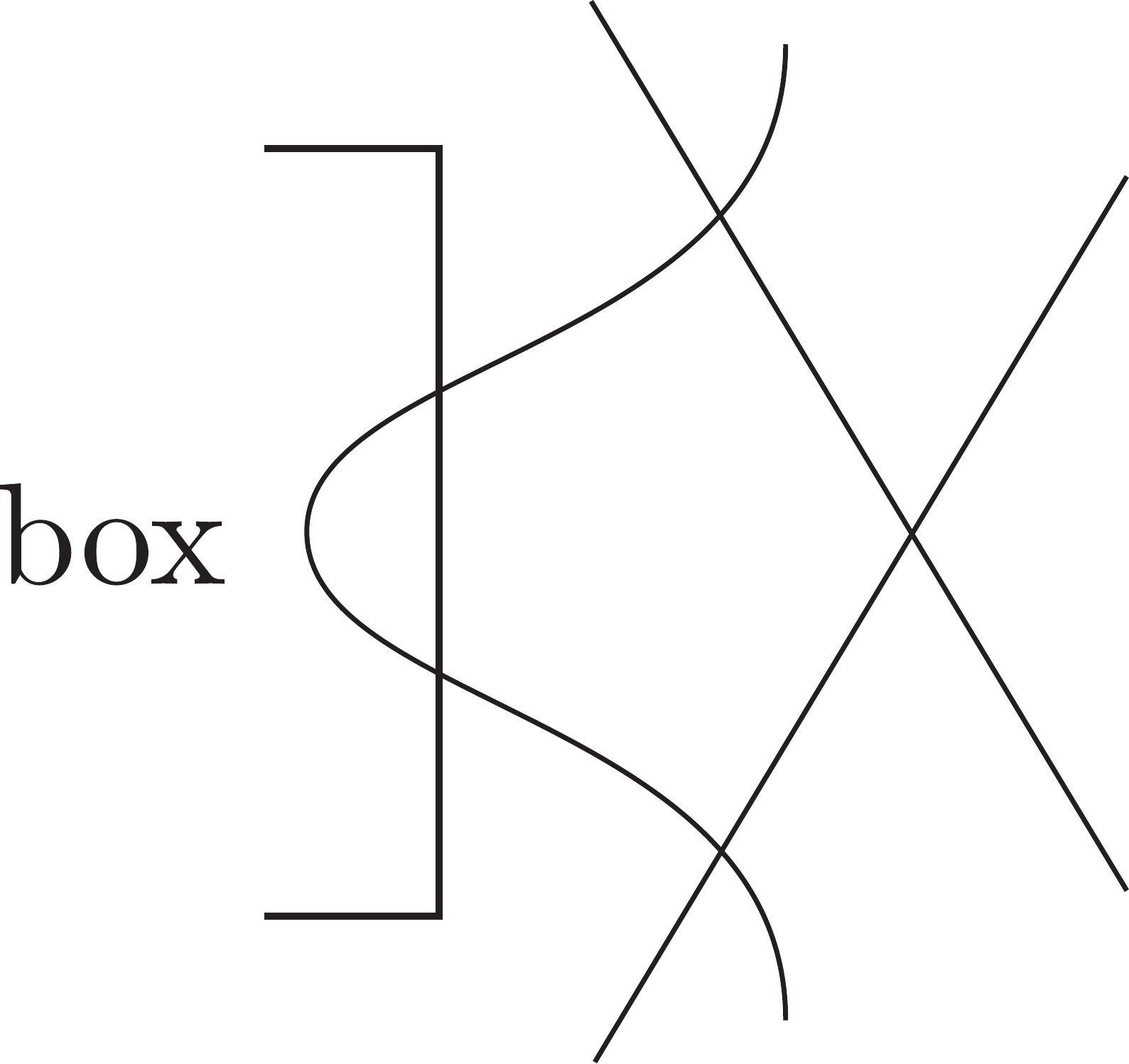}
\caption{Impossible case}\label{1311}
\end{figure}

\item Suppose that $T_{r-1}$ has at least one double point in a box.    
By the induction assumption, there is no double point outside the boxes for $Q_{r-1}$.  Thus, if $T_{r-1}$ is not inside a box, there are exactly two cases.  
\begin{enumerate}
\item Case~1. A double point of $T_{r-1}$ is inside a box and the other two double points of $T_{r-1}$ are in another box, as shown in the left figure of Fig.~\ref{1312}.  
\item Case~2. The three double points of $T_{r-1}$ are in three different boxes, respectively, as shown in the right figure of Fig.~\ref{1312}.  
\end{enumerate}  
\begin{figure}[h!]
\includegraphics[width=7cm]{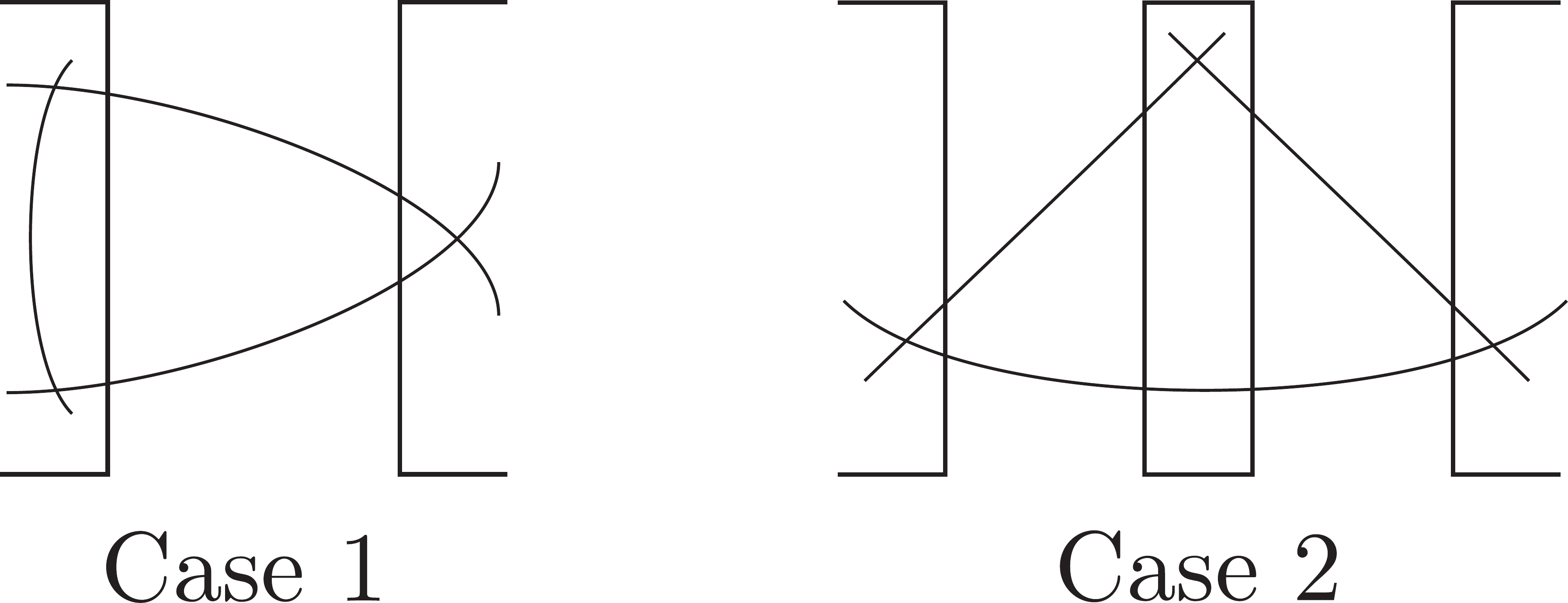}
\caption{Case~1 (left) and Case~2 (right)}\label{1312}
\end{figure}
As a result, for Case~1 (Case~2,~resp.), by retaking one box (two boxes,~resp.), as shown in Fig.~\ref{1313}, $T_{r-1}$ is contained entirely within a box.  Note that if $Q_{r-1}$ satisfies (B) of ($2m, 2n$) box property, then $Q_{r}$ satisfies (B) of ($2m, 2n$) box property since every connection preserves in each box and the positions of endpoints on $\partial B_i$ also preserve up to ambient isotopy  under the deformation.  
\begin{figure}[h!]
\includegraphics[width=10cm]{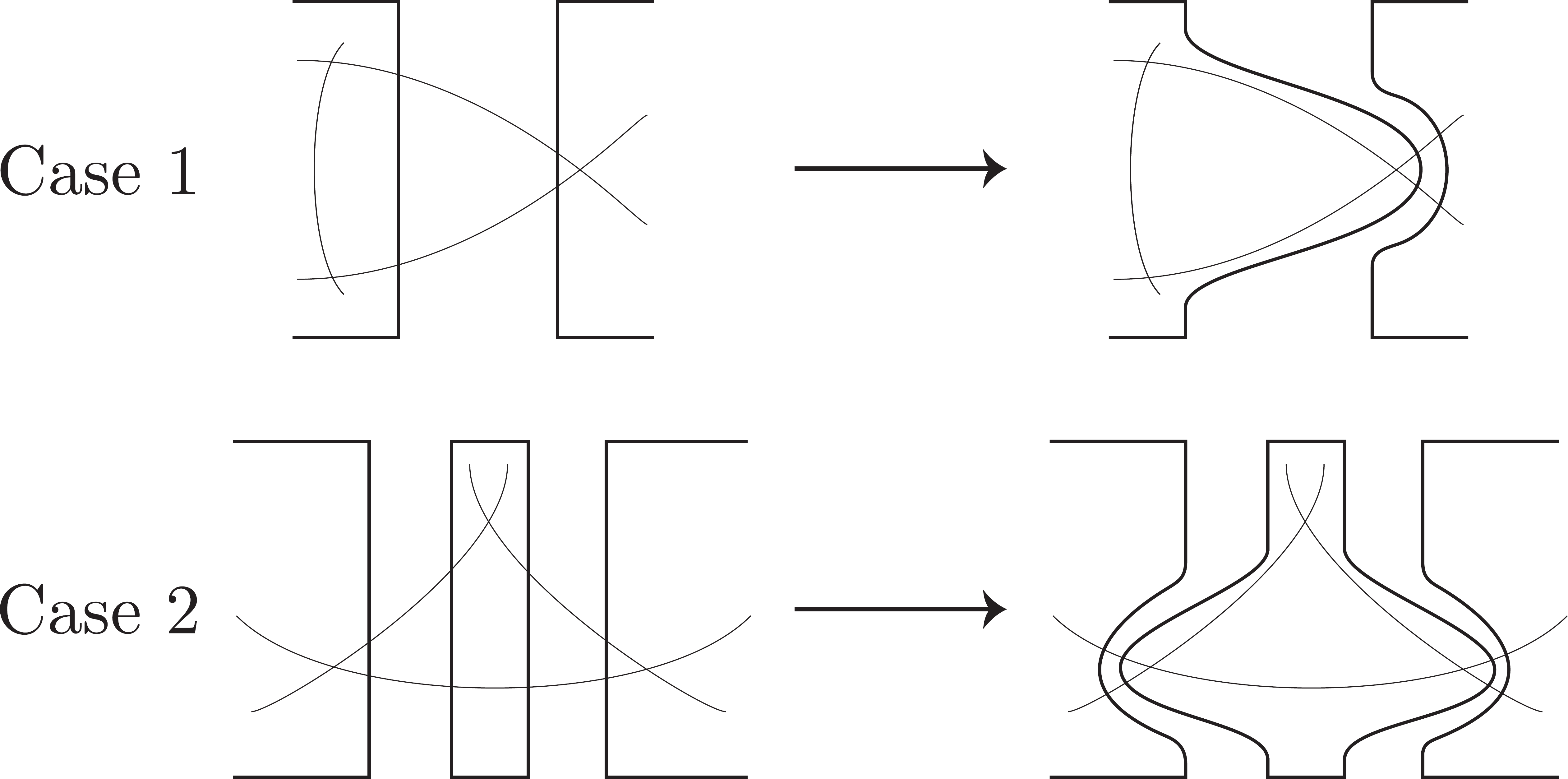}
\caption{Retaking box(es) for Case~1 (upper) and Case~2 (lower).  The positions of endpoints on $\partial B_i$ preserve up to ambient isotopy under the deformation.}\label{1313}
\end{figure}
\end{enumerate}
\end{itemize}
This completes the proof of Claim~\ref{claim_A}, which implies that $\rii(P(m, n)) \ge 1$.  

\subsection{Proof of $\rii(P(m, n)) \ge m$.}\label{36}
Recall Claim~\ref{claim_A}.    
By the argument of the above proof of Claim~\ref{claim_A},  we have Lemma~\ref{lem5}.  
\begin{lemma}\label{lem5}
Let $k$ be an integer $(0 \le k \le r)$.  
Let $Q_0$ $=$ $P(m, n)$ and $Q_k$ be the knot projection obtained from $P(m, n)$ by $Op_k$ using Notation~\ref{notation1}:  
\[
P(m, n) = Q_0 \stackrel{Op_1}{\to} Q_1 \stackrel{Op_2}{\to} \dots \stackrel{Op_r}{\to} Q_r,   
\]
where the sequence consists of a single deformation of negative type 2 and deformations of type 1 and 3.  
If $Op_{k+1}$ is a deformation of negative type 2 within a box, then we have the following statements.  
\begin{enumerate}
\item Each of $\{Q_0, Q_1, \ldots, Q_{k} \}$  preserves $(2m, 2n)$ box property.  \label{statement1}
\item  $Q_{k+1}$ satisfies $(2m-2, 2n)$ box property.  \label{statement2}
\item Each of $\{Q_{k+2}, Q_{k+3}, \ldots, Q_r \}$ preserves $(2m-2, 2n)$ box property.    \label{statement3}
\end{enumerate}
If a deformation $Op_{k+1}$ of negative type 2 is not within a box, then, by retaking boxes, the case returns to the case that $Op_{k+1}$ is a deformation of negative type 2 within a box.  
\end{lemma}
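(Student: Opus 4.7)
The plan is to deduce the lemma almost entirely from Claim~\ref{claim_A}, together with a local analysis of the effect of a single negative type~2 move inside a box.  Since $P(m,n)$ satisfies the $(2m,2n)$ box property by construction, and the initial subsequence $Q_0\to Q_1\to\dots\to Q_k$ consists solely of type~1 and type~3 moves, Claim~\ref{claim_A} applied to this prefix yields statement~(\ref{statement1}) after possibly retaking boxes as in Sections~3.2--3.4.

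For statement~(\ref{statement2}), the key observation is that a negative type~2 deformation is supported in a small disc containing a bigon; when this disc lies inside the box $B_i$ in which $Op_{k+1}$ occurs, the move fixes $\partial B_i$ and in particular fixes the six endpoints on $\partial B_i$ together with the pairing of these endpoints induced by the three strands.  Hence $Q_{k+1}\cap B_i$ is still a $(3,3)$-tangle whose endpoint configuration is one of those in Fig.~\ref{16}, and the labels $1,2,3$ assigned to strands carry over unchanged.  The deformation removes exactly two double points from $B_i$; therefore the number of double points formed by a subarc of strand~1 and a subarc of strand~2 drops by at most two, and so remains at least $2m-2$.  All the other boxes $B_j$ ($j\neq i$) are untouched.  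Together these facts give the $(2m-2,2n)$ box property for $Q_{k+1}$.

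Statement~(\ref{statement3}) is then obtained by applying Claim~\ref{claim_A} once more to the suffix $Q_{k+1}\to Q_{k+2}\to\dots\to Q_r$, which again consists solely of type~1 and type~3 moves, with parameter $m$ replaced by $m-1$; the hypothesis of Claim~\ref{claim_A} is exactly the $(2m-2,2n)$ box property established in~(\ref{statement2}).

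For the final assertion, suppose $Op_{k+1}$ is a negative type~2 deformation whose supporting bigon is not contained in a single box of $Q_k$.  Because $Q_k$ satisfies the $(2m,2n)$ box property, no double point of $Q_k$ lies outside the boxes, so both vertices of the bigon lie in boxes and only the two edges of the bigon can run partly through $S^2\setminus(B_1\cup\dots\cup B_{2n})$.  This parallels the situations treated in Sections~3.3 and~3.4: a sphere isotopy analogous to the retakings in Figs.~\ref{1307} and~\ref{1313}, enlarging one or two boxes so as to swallow the bigon, places $Op_{k+1}$ inside a single box without disturbing (A) or~(B) of the $(2m,2n)$ box property.  I expect the main delicacy to be verifying that such an enlargement does not drag a third strand through the modified box in a manner inconsistent with Fig.~\ref{16}, but the sphere isotopies used in Claim~\ref{claim_A} carry over because the combinatorics of a bigon straddling two boxes match those already handled for the $1$-gon and $3$-gon cases outside the boxes.
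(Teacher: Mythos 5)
Your proposal is correct and follows essentially the same route as the paper: statements (1) and (3) by invoking Claim~\ref{claim_A} (respectively with $m$ replaced by $m-1$) on the prefix and suffix of type~1/3 moves, statement (2) by noting that the negative type~2 move inside a box fixes the tangle's boundary data and removes at most two of the strand-1/strand-2 double points, and the final assertion by retaking one or two boxes exactly as the paper does in Fig.~\ref{negative2}.
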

\begin{proof}
Suppose that $Op_{k+1}$ is a deformation of negative type 2 within a box.  

\noindent (\ref{statement1}) ((\ref{statement3}),~resp.) By using the argument of the proof of Claim~\ref{claim_A}, we may suppose that every $Op_j$ ($j \neq k+1$) is a deformation of type 1 or 3 within a box by retaking boxes.  Since $Op_j$ of type 1 or 3 within a box preserve ($2m, 2n$) (($2m-2, 2n$),~resp.) box property, the statement (\ref{statement1}) ((\ref{statement3}),~resp.) holds.   

\noindent (\ref{statement2}) The deformation of negative type 2 decreases or preserve double points formed by a subarc of 1 and a subarc of 2.  It implies the statement (\ref{statement2}).  

Suppose that a deformation $Op_{k+1}$ of negative type 2 is not within a box.  
Then, by retaking one box (two boxes,~resp.), as shown in Fig.~\ref{negative2}, the case returns to the case that $Op_{k+1}$ is a deformation of negative type 2 within a box.  
\end{proof}
\begin{figure}[h!]
\includegraphics[width=8cm]{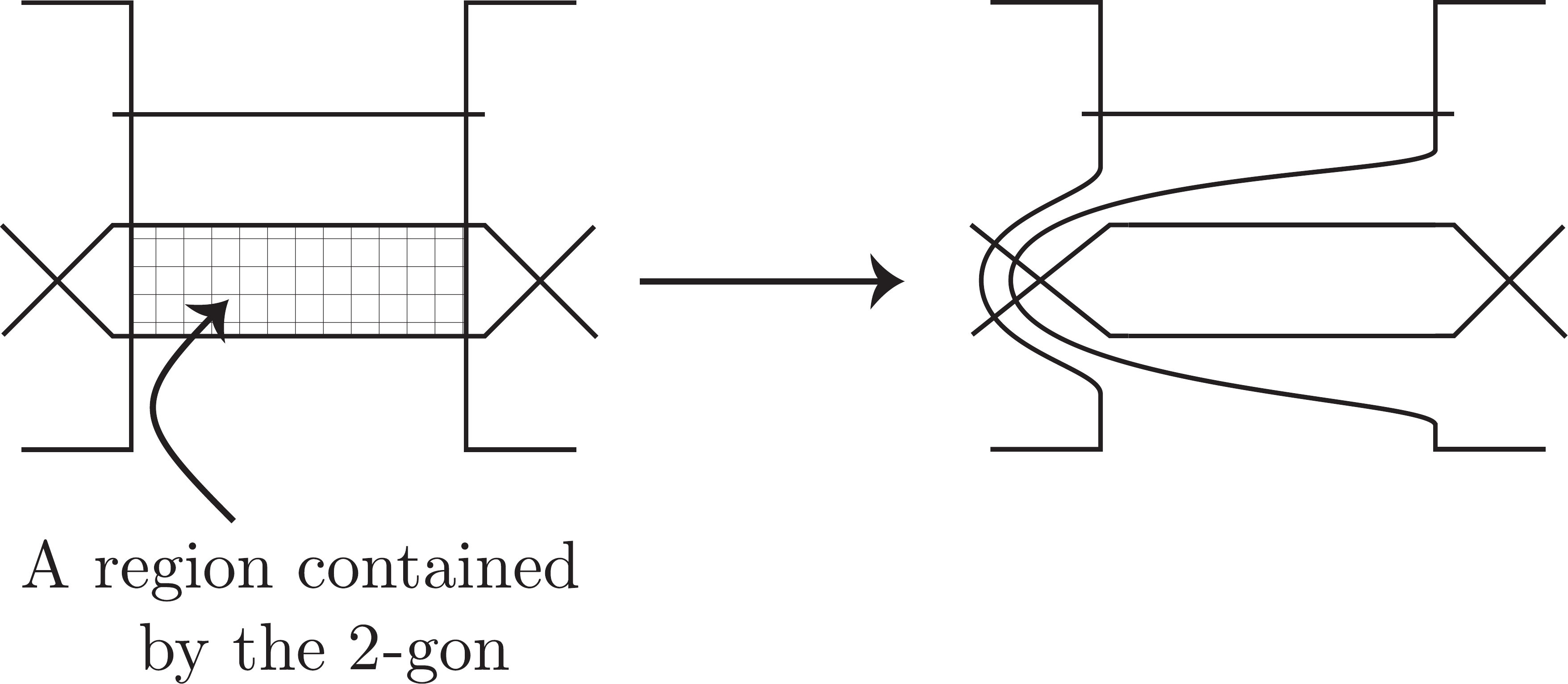}
\caption{A bigon to which will be applied a deformation of negative type~2 not occurring within a box}\label{negative2}
\end{figure}
Lemma~\ref{lem5} immediately implies $\rii (P(m, n)) \ge m$.     

\subsection{Proof of $P(m, n) \le m$.}
We prepare Lemma~\ref{lem4}.  
\begin{lemma}\label{lem4}
Let $k$ be a positive integer.  Each of replacements $T(2k-1)$ and $T(2k)$ as in Fig.~\ref{tangle1i} is always possible under (1, 3) homotopy.   
\newline
\begin{figure}[h!]
\begin{center}
\includegraphics[width=10cm]{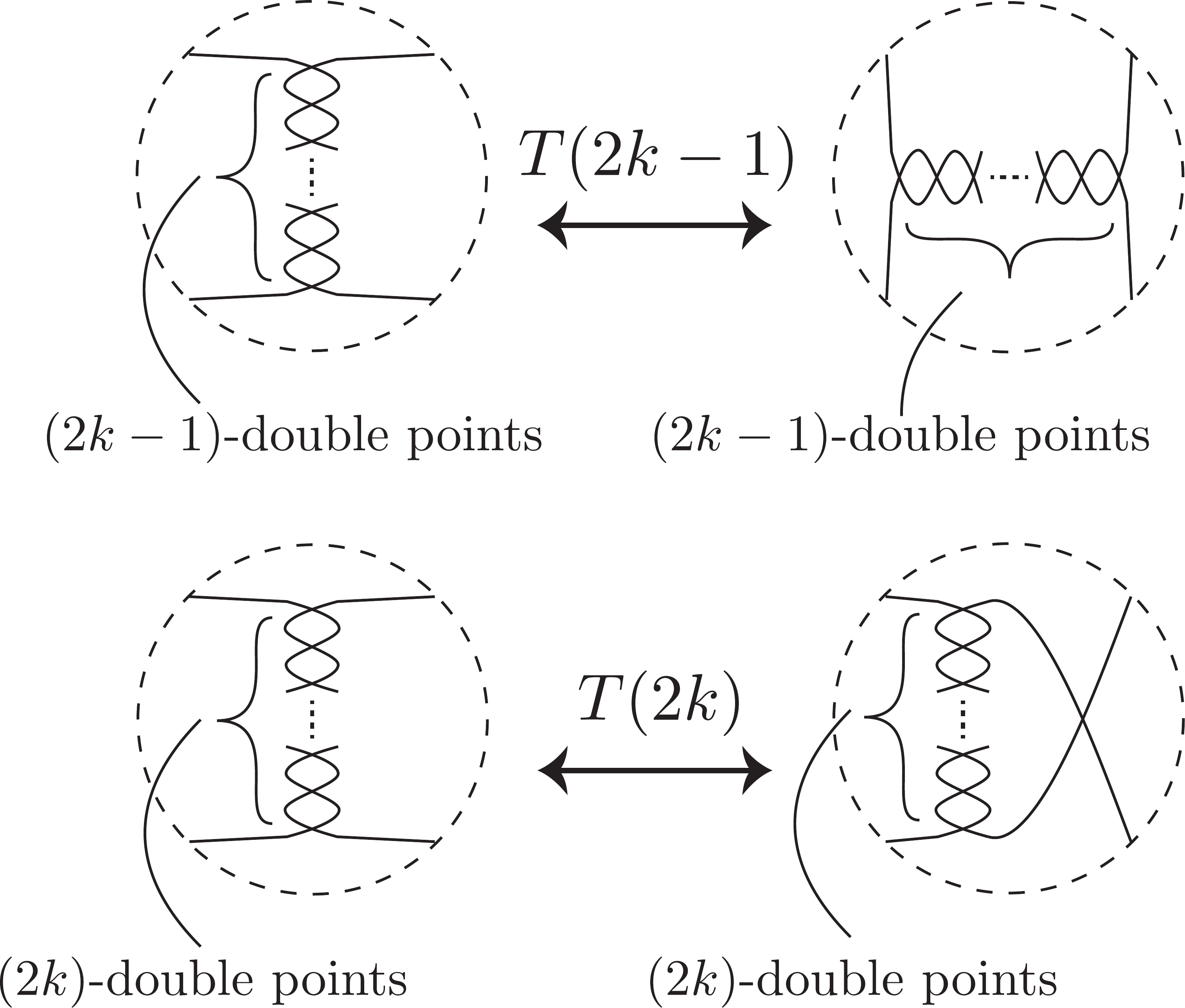}
\end{center}
\caption{Definitions of $T(2k-1)$ and $T(2k)$}\label{tangle1i}
\end{figure}
\end{lemma}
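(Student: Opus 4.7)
The plan is to proceed by induction on $k$, treating the odd case $T(2k-1)$ and the even case $T(2k)$ in parallel. The base cases $k=1$, namely $T(1)$ and $T(2)$, should be handled by writing down an explicit short sequence of type~1 and type~3 deformations. The natural construction is to create a small monogon via one type~1 deformation at a chosen endpoint on $\partial B_i$, and then transport its double point into the prescribed position in the tangle by a chain of type~3 deformations. At each intermediate stage, the triangle in which type~3 is to be applied contains no other arcs of the tangle, so each move is admissible; I would verify this directly from Fig.~\ref{tangle1i}.

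For the inductive step, assume $T(2k-3)$ and $T(2k-2)$ are realizable under (1,3) homotopy. The key observation is that, as one reads off the definitions in Fig.~\ref{tangle1i}, $T(j)$ differs from $T(j-2)$ by the insertion of a single extra pair of crossings between the two interacting strands. Such a pair can be produced by one type~1 deformation (introducing a curl on the outer strand) followed by a sequence of type~3 deformations that push the curl past the adjacent strand so that two new transverse intersections are created in the correct slots. Once this is carried out, the remaining task is to realize $T(j-2)$ inside the same portion of the tangle, which the inductive hypothesis supplies. Composing the two (1,3) homotopies gives the desired replacement.

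The main obstacle I anticipate is bookkeeping: one must verify that after each preliminary type~1 deformation, the triangles required for the subsequent type~3 slides remain genuinely empty, and that the strand along which the new curl travels arrives at the correct pair of endpoints on $\partial B_i$. This depends on the precise layout of strands~1, 2, and 3 in Fig.~\ref{tangle1i}, and the two parities $2k-1$ and $2k$ may require slightly different sequences because the final crossing is oriented differently in each case. I would therefore carry out the two parities in separate sub-inductions, with the $k=1$ cases explicit and the inductive steps differing only in which strand receives the new curl.
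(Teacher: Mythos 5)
Your proposal takes essentially the same route as the paper: the paper also argues by induction on $k$, checking $T(1)$ and $T(2)$ explicitly by short sequences of type~1 and type~3 deformations, and in the inductive step producing the extra crossings by a curl created with a type~1 move and transported by type~3 moves before invoking the smaller case. The only organizational difference is that the paper chains the parities ($T(2i-2)\Rightarrow T(2i-1)\Rightarrow T(2i)$) instead of running two separate sub-inductions, which disposes of the parity bookkeeping you were worried about.
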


\begin{proof}
We prove the statement of the lemma by the induction.  
\begin{itemize}
\item Case $k=1$: for $T(1)$, the statement is clear.  $T(2)$ holds as in Fig.~\ref{lemma1b2i}.  
\begin{figure}[h!]
\begin{center}
\includegraphics[width=8cm]{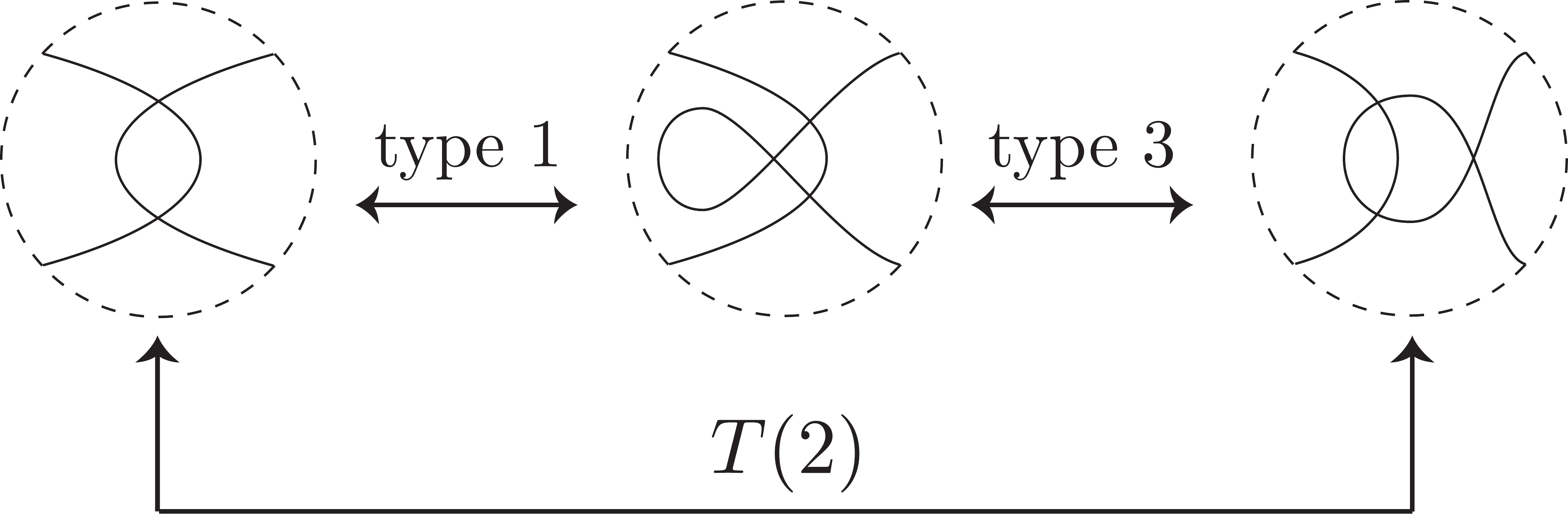}
\end{center}
\caption{$T(2)$}\label{lemma1b2i}
\end{figure}
\item Case $k=i$: Suppose that $T(2i-2)$ is a possible local replacement under (1, 3) homotopy.  In this case, $T(2i-1)$ holds, which also implies $T(2i)$.  See Fig.~\ref{tangle2i}.    
\begin{figure}[h!]
\begin{center}
\includegraphics[width=12cm]{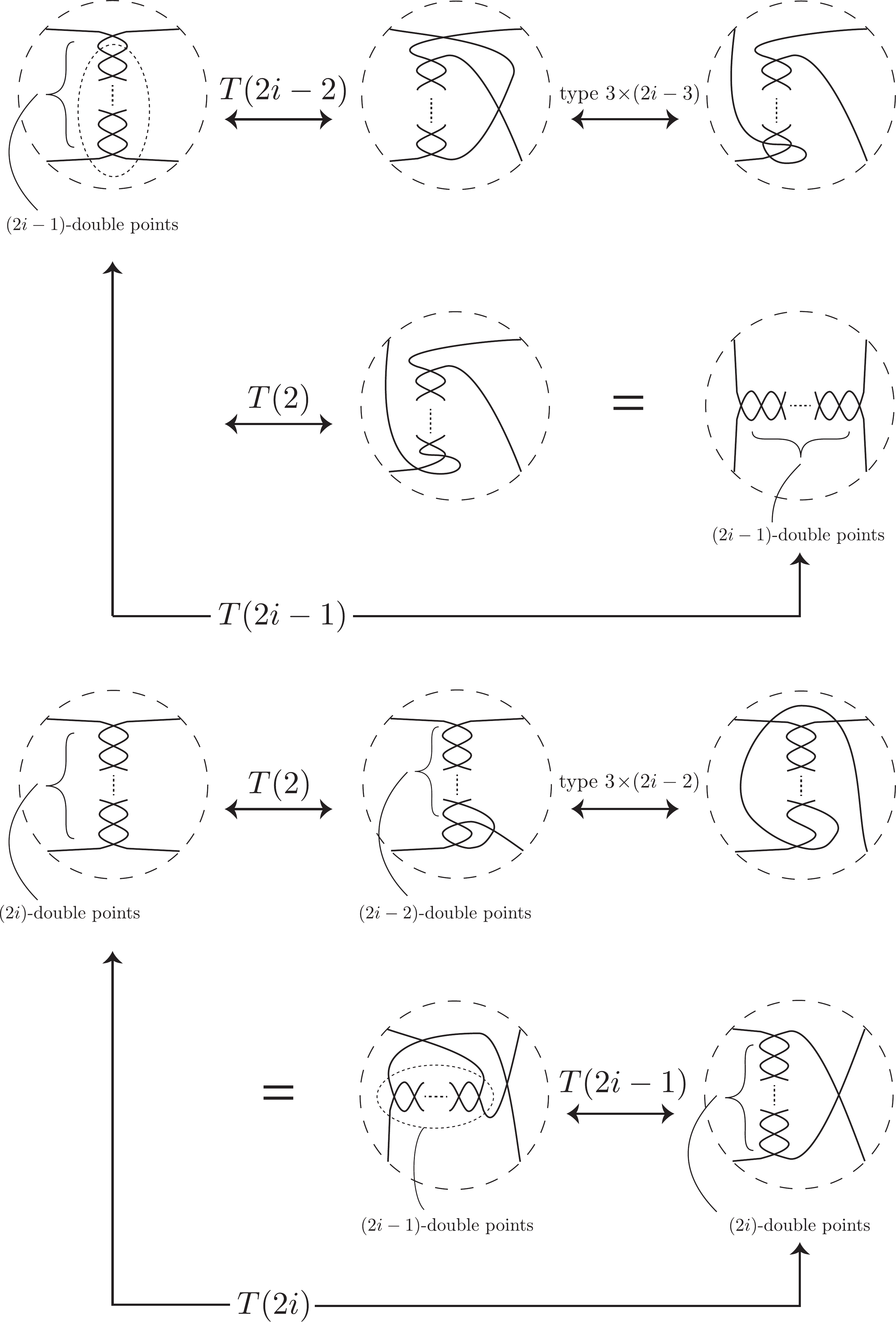}
\end{center}
\caption{An inductive argument of $T(2i-1)$ and $T(2i)$}\label{tangle2i}
\end{figure}
\end{itemize}
It completes the induction step.  
\end{proof}

\noindent $\bullet$ {\it{Proof of $\rii(P(m, n)) \le m$.}}
Let $x_1$ ($x_k$,~resp.) be the twisting part, including exactly $2m$ double points, corresponding to the first ($k$-th,~resp.) box from the bottom of Fig.~\ref{tangle3i}.   First, we apply $m$ deformations of negative type~2 to the twisting part in a single box $x_1$, which erases $x_1$.   
Second, by applying $T(2m)$ to $x_3$ and the leftmost single double point of $x_2$, the number of double points in $x_2$ decreases by $1$.  Repeating this argument $2m-1$ times, we complete erasing a single box $x_2$.   
By applying $T(2m)$ to $x_4$ and the rightmost  single double point of $x_3$, the number of double points in $x_3$ decreases by $1$.  Repeating this argument $2m-1$ times, we complete erasing a single box $x_3$.   
We apply the argument to each $x_k$, which erase $2n-2$ boxes.   
\begin{figure}[h!]
\begin{center}
\includegraphics[width=10cm]{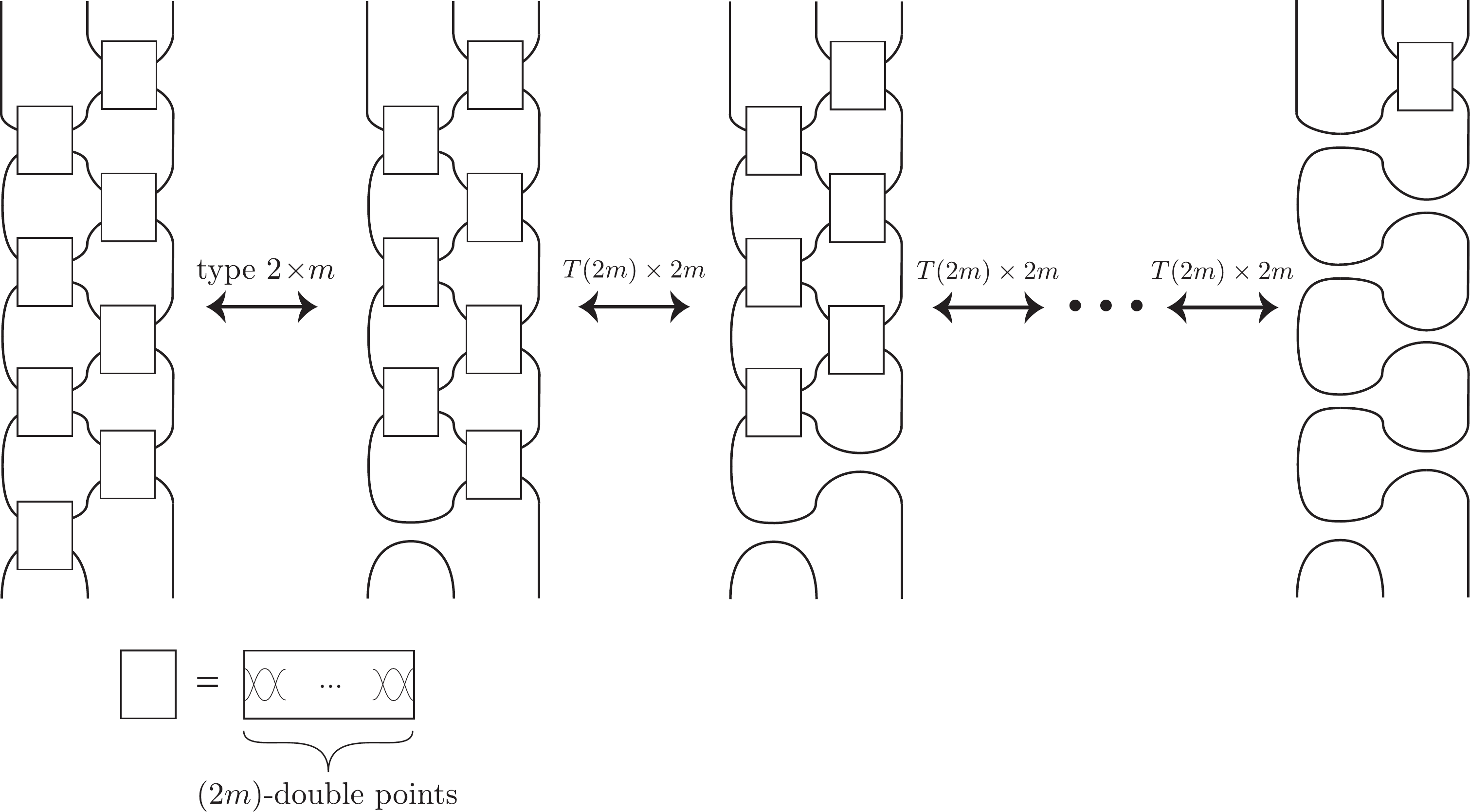}
\end{center}
\caption{Applications of Lemma~\ref{lem4}}\label{tangle3i}
\end{figure}
Finally, we apply $2 m$ deformations of type~1 as in Fig.~\ref{tangle4i}.  

\begin{figure}[h!]
\begin{center}
\includegraphics[width=6cm]{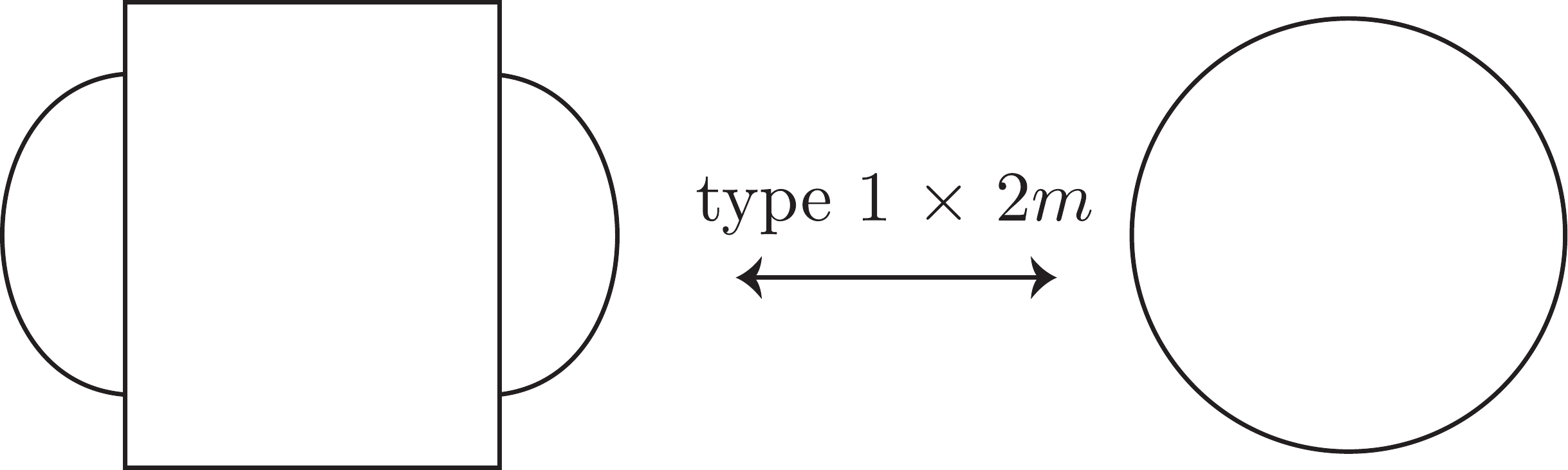}
\end{center}
\caption{$2m$ deformations of type 1}\label{tangle4i}
\end{figure}

$\hfill\Box$

\begin{remark}
The operation $T(n)$ ($n \in \mathbb{Z}_{>0}$) of Lemma~\ref{lem4} corresponds to a generalization of a type of an  edge of a complex \cite{FHIKM} or the operation $\alpha$ \cite{HI}.      
\end{remark}


\section{Applications}\label{application}
\subsection{Every pretzel knot projection is (1, 3) homotopic to the trivial spherical curve.}\label{secA1}
\begin{notation}
A part consisting of $m$ double points of a knot projection as in Fig.~\ref{1303}, it is called a \emph{twist}.  
\end{notation}
\begin{proposition}\label{proposition1}
Every pretzel knot projection as in Fig.~\ref{pre} is (1, 3) homotopic to the trivial spherical curve.  
\end{proposition}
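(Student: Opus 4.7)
The plan is to induct on the number $n$ of twist regions in the pretzel projection, using Lemma~\ref{lem4} as the main reduction tool. The critical structural feature that distinguishes a pretzel projection from $P(m,n)$ is that its twists are arranged \emph{linearly} (joined by top and bottom arcs coming off two extremal twists) rather than \emph{cyclically}: the extremal twists each sit next to a region whose boundary is free on one side rather than being clamped on both sides by another twist region. This is precisely what will allow the $T$-replacements of Lemma~\ref{lem4} to fire all the way down to a trivial projection without ever needing a negative type~2 deformation.

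For the inductive step I would focus on the rightmost twist, containing $a_n$ crossings, together with its immediate left neighbor containing $a_{n-1}$ crossings. Exactly as in the proof of the inequality $\rii(P(m,n)) \le m$ (Fig.~\ref{tangle3i}), one can apply $T(a_n)$, or more precisely an appropriate sequence of $T(2k-1)$ and $T(2k)$ moves, to transfer crossings from the rightmost twist across the connecting strand and absorb them into (and through) the neighboring twist, one crossing at a time, until the rightmost twist is entirely emptied. The top and bottom horizontal arcs then merge and a pair of consecutive type~1 loops is produced that can be removed. The result is a pretzel projection with only $n-1$ twist regions, to which the induction hypothesis applies. The difference from Theorem~\ref{main1} is that in the pretzel case we never need the initial $m$ negative type~2 moves that were used there to destroy the box $x_1$, because the linear arrangement of the pretzel means there is no ``final'' twist that gets trapped between two boxes when we reach the last step.

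The base case is $n \le 1$: a pretzel projection with zero or one twist region is a sequence of nugatory loops on a single circle and collapses to the trivial spherical curve by deformations of type~1 alone. Combined with the inductive reduction, every deformation used is of type~1 or type~3, and the pretzel projection is therefore (1,3)-homotopic to the trivial spherical curve.

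The main obstacle I anticipate is purely diagrammatic bookkeeping: verifying that the $T$-replacements of Lemma~\ref{lem4} apply with the correct parity (that is, which of $T(2k-1)$ or $T(2k)$ to invoke depending on whether $a_n$ is even or odd) and that the orientations of the connecting strands entering and leaving each twist are configured so that the templates of Fig.~\ref{tangle1i} match. Handling these cases uniformly, in parallel with the proof of $\rii(P(m,n))\le m$ but without its initial negative type~2 step, should complete the argument.
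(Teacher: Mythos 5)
There is a genuine gap, and it starts with the structural premise. You claim the twists of a pretzel projection are arranged \emph{linearly}, with an extremal twist that is ``free on one side.'' That is not the case: on $S^2$ the $n$ twist regions of $P(a_1,\dots,a_n)$ are arranged cyclically (the two long closure arcs make twist $1$ and twist $n$ adjacent), exactly like the $2n$ boxes of $P(m,n)$. What distinguishes the pretzel from $P(m,n)$ is the internal tangle in each box, not the global arrangement, so the feature you rely on to avoid the negative type~2 moves does not exist. Your base case also fails: $P(a_1)$ with a single twist is the $(2,a_1)$-torus projection, which has no $1$-gon and therefore cannot be collapsed by deformations of type~1 alone; it already requires a $T(a_1)$ rotation before any kinks appear. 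Finally, the inductive step's mechanism --- ``transfer crossings from the rightmost twist \dots and absorb them into the neighboring twist'' --- is not what Lemma~\ref{lem4} provides. Two adjacent \emph{vertical} twists of a pretzel do not concatenate into one twist (if they did, $P(1,1,1)$ would be a triply kinked unknot rather than the trefoil projection); $T(2k-1)$ rotates an odd twist by a quarter turn, while $T(2k)$ lets an even twist \emph{cancel} one adjacent double point without transferring anything. The parity questions you defer as ``bookkeeping'' are in fact the substance of the proof, since they also govern which parameter vectors give a one-component projection at all, and a (1,3) homotopy can never pass through a configuration with a different number of components.

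The paper's argument is global rather than inductive and avoids all of this. It first observes that a connected pretzel projection falls into exactly two cases: all $a_i$ odd with $n$ odd, or exactly one $a_i$ even. In the first case one applies $T(a_i)$ to \emph{every} twist simultaneously; the rotated horizontal twists concatenate in series into a single $(2,\sum a_i)$-torus projection, which is then killed by one more odd $T$-move followed by type~1 moves. In the second case the odd twists are rotated and merged, and the single even twist absorbs the remaining double points one at a time by repeated applications of $T(a_1)$, after which type~1 moves finish. If you want to salvage an induction on $n$, you would need to reduce to the correct base case (the $(2,2p+1)$-torus projection, handled via $T(2p+1)$, not via type~1 moves) and verify at each step that the reduced parameter vector still describes a knot projection; as written, your reduction does neither.
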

\begin{proof}
Let $P(a_1, a_2, \ldots, a_n)$ be a knot projection as in Fig.~\ref{pre}, where each $a_i$ presents double points of a twist ($1 \le i \le n$).  
\begin{figure}[h!]
\includegraphics[width=5cm]{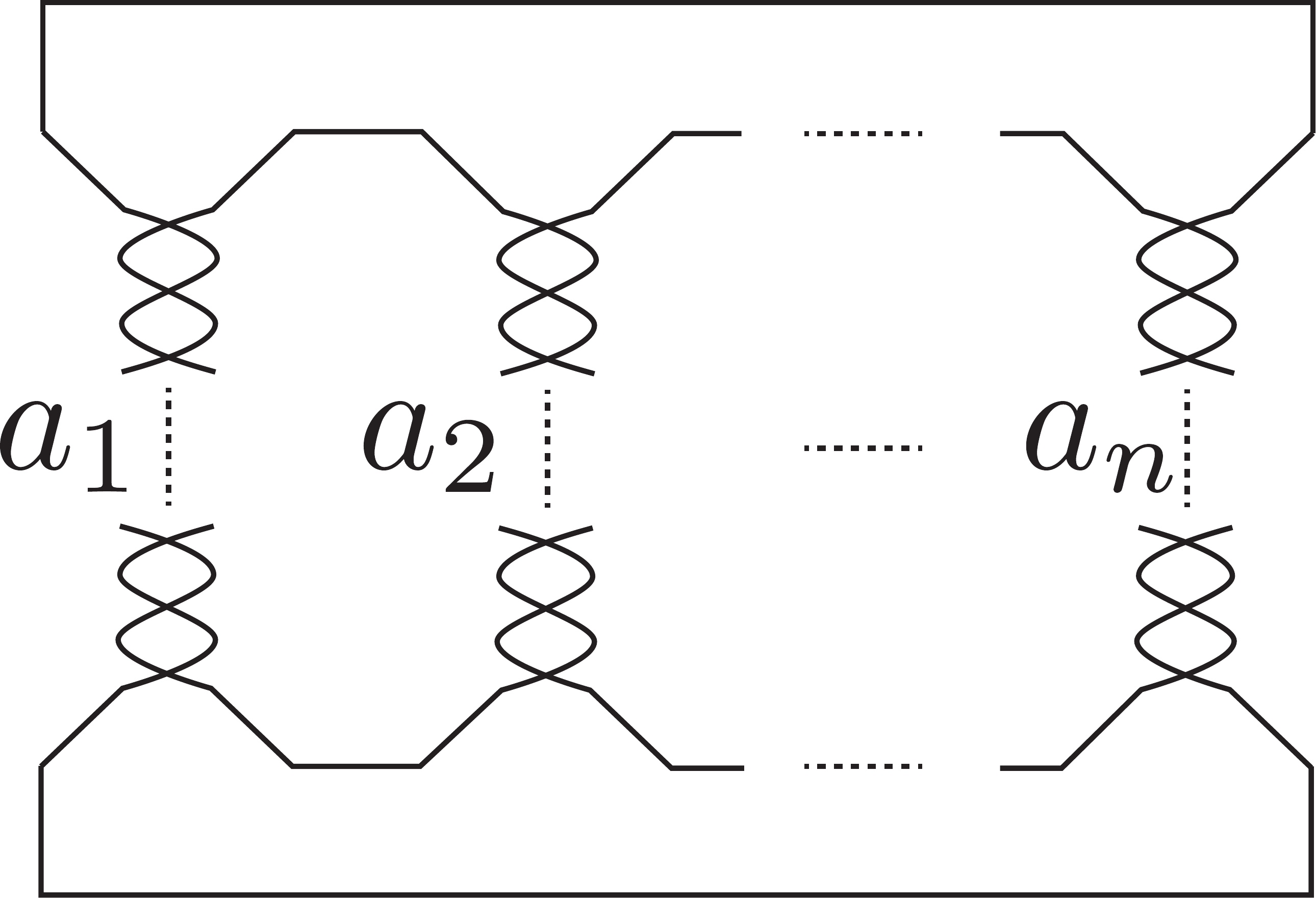}
\caption{Pretzel knot projection $P(a_1, a_2, \dots, a_n)$}\label{pre}
\end{figure}

It is sufficient to consider the two cases.  
\begin{itemize}
\item Case~1: for each $i$, $a_i$ are odd double points and $n$ is an odd positive number.    
\item Case~2: $a_1$ are even double points and for $i \neq 1$, $a_i$ are odd double points.     
\end{itemize}

\begin{figure}[h!]
\includegraphics[width=2cm]{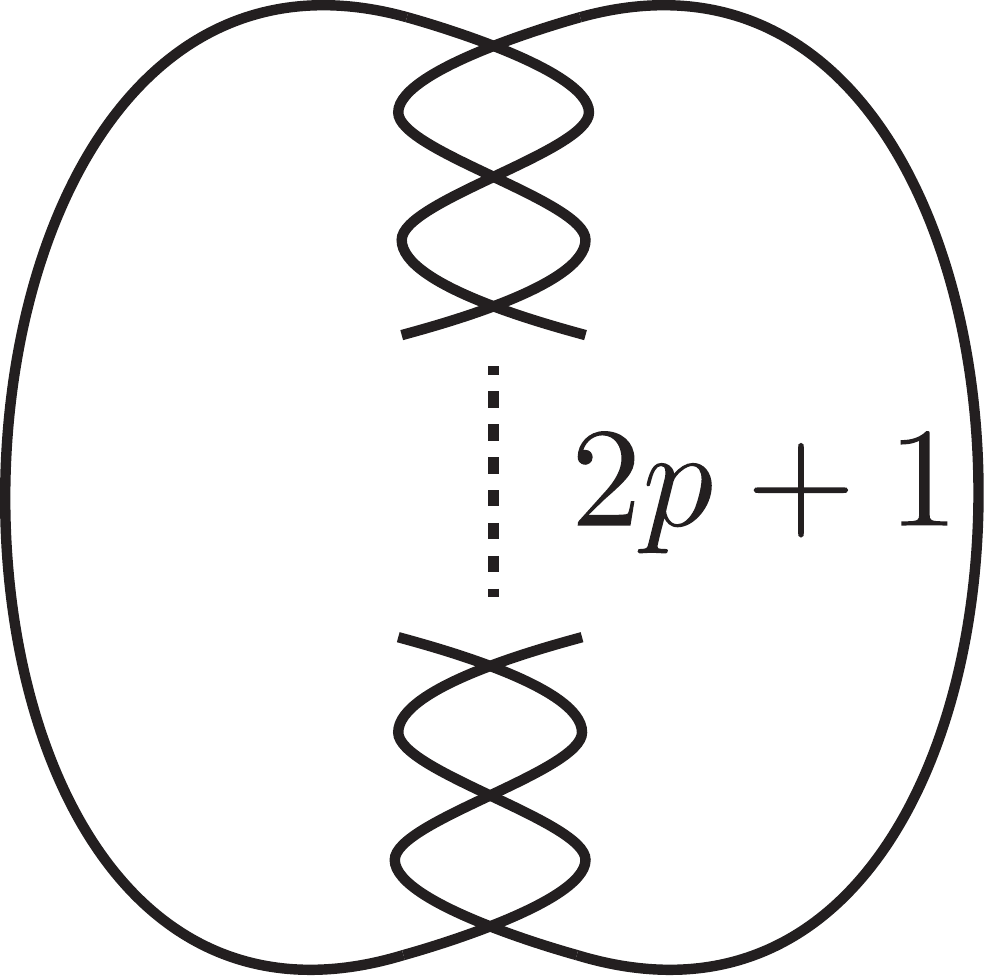}
\caption{$(2, 2p+1)$-torus knot projection}\label{torus}
\end{figure}

It is easy to see that every $(2, 2p+1)$-torus knot projection ($p \in \mathbb{N}$), as shown in Fig.~\ref{torus}, is (1, 3) homotopic to the trivial spherical curve by applying $T(2p+1)$ and applying deformations of type 1 decreasing double points.  

For Case~1, we apply $T(a_i)$ to each twist, and we have a $(2, 2p+1)$-torus knot projection where $2p+1$ $=$ $\sum_{i=1}^{n} a_i$.  For Case~2, since $a_i$ ($i \neq 1$) is an odd number, we apply $T(a_i)$ to each twist corresponding to $a_i$ double points ($i \neq 1$), and apply $T(a_1)$ ($\sum_{i=2}^{n} a_i$ times) to the resulting knot projection, which implies a knot projection having exactly $a_1$ double points.  After that, deformations of type 1 decreasing double points obtain the trivial spherical curve.  
\end{proof}
\subsection{Every two-bridge knot projection is (1, 3) homotopic to the trivial spherical curve.}\label{secA2}
\begin{proposition}\label{proposition2}
Every two-bridge knot projection as in Fig.~\ref{ratio} is (1, 3) homotopic to the trivial spherical curve.  
\end{proposition}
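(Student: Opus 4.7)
The plan is to proceed by induction on the number $n$ of twist regions in the two-bridge projection $P(a_1,a_2,\ldots,a_n)$, using Lemma~\ref{lem4} as the principal tool, in the same spirit as the proof of Proposition~\ref{proposition1}. Recall that a two-bridge (rational) knot projection differs from a pretzel projection in that the twists $a_1,\ldots,a_n$ are arranged as a continued-fraction tower (alternating horizontal and vertical), rather than being placed in parallel. Hence, instead of applying $T(a_i)$ to every twist simultaneously, I expect to peel the twists off one at a time from the outside.

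For the base case $n=1$, the projection $P(a_1)$ is either the trivial spherical curve (after type~1 reductions, if $a_1$ is suitable) or a $(2,a_1)$-torus knot projection with $a_1$ odd, which was already shown to be $(1,3)$ homotopic to the trivial spherical curve inside the proof of Proposition~\ref{proposition1} by applying $T(a_1)$ followed by type~1 moves. For the inductive step, I would apply $T(a_n)$ to the outermost twist. By Lemma~\ref{lem4}, this replacement is realizable under $(1,3)$ homotopy. Geometrically, applying $T(a_n)$ pushes the $a_n$ crossings across the arc that cuts into the adjacent $a_{n-1}$-twist, and the result is $(1,3)$ equivalent (after absorbing the produced single crossings into the neighbouring twist, together with some type~1 kinks to be removed) to a two-bridge projection of the form $P(a_1,a_2,\ldots,a_{n-2},a_{n-1}')$ with $n-1$ twist regions. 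The inductive hypothesis then finishes the reduction to the trivial spherical curve.

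The most delicate step will be verifying that a single application of $T(a_n)$ genuinely produces a two-bridge projection with one fewer twist region, rather than some more complicated rational tangle that our induction does not immediately cover. As in Proposition~\ref{proposition1}, I expect a short parity case analysis: the outcome of $T(a_n)$ depends on whether $a_n$ is even or odd, and on whether the $a_n$-twist is arranged horizontally or vertically (equivalently, whether $n$ is even or odd) in the standard rational-tangle diagram. After handling these finitely many alternatives and invoking type~1 simplifications where necessary, the structure collapses to a two-bridge diagram with strictly fewer twist regions. Iterating, the induction terminates at the base case $n=1$ and produces the required $(1,3)$ homotopy to the trivial spherical curve.
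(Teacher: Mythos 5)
Your proposal matches the paper's proof in essentials: the paper also inducts on the number of twists, reducing that number by one at each step via Lemma~\ref{lem4}, with exactly the parity case analysis you anticipated but deferred (if the chosen twist $a_1$ is odd, a single $T(a_1)$ merges it with the adjacent twist into one of $a_1+a_2$ double points; if $a_1$ is even, applying $T(a_1)$ $a_2$ times erases the adjacent twist). The only cosmetic difference is that the paper peels twists off starting from $a_1$ rather than from the outermost twist $a_n$.
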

\begin{figure}[htbp]
\includegraphics[width=8cm]{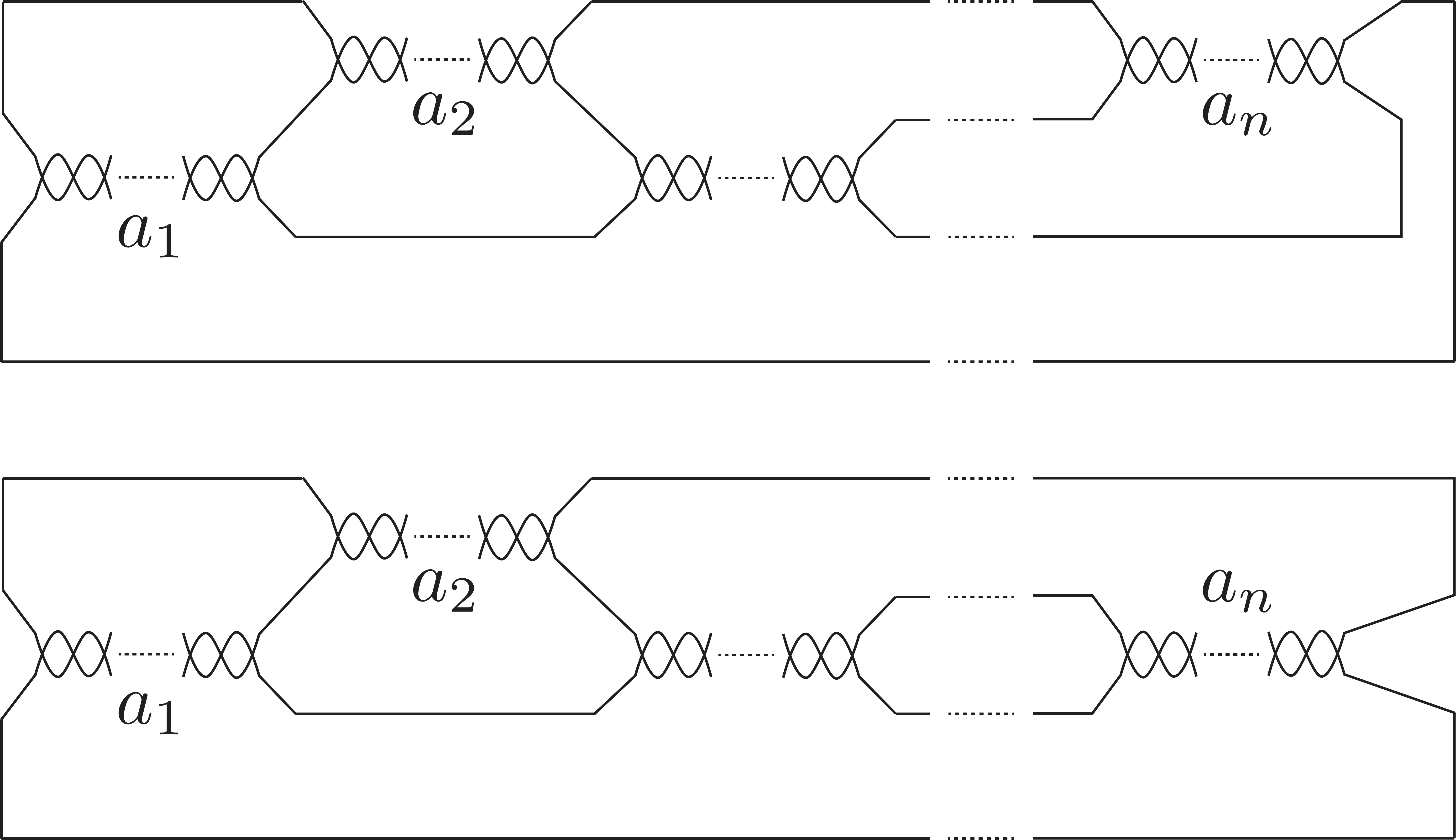}
\caption{Two-bridge knot projection $P(a_1, a_2, \ldots, a_n)$}\label{ratio}
\end{figure}

\begin{proof}
Let $P(a_1, a_2, \ldots, a_n)$ be a knot projection as in Fig.~\ref{ratio}, where each $a_i$ presents double points of a twist ($1 \le i \le n$).  

\begin{itemize}
\item Case~1: Suppose that $a_1$ is an odd number corresponding to $a_1$ double points.  By applying a single $T(a_1)$, two twists $a_1$ and $a_2$ merge a twist consisting of $a_1 + a_2$ double points.   
\item Case~2: Suppose that $a_1$ is an even number corresponding to $a_1$ double points.  By applying $T(a_1)$ $a_2$ times, $a_2$ double points are resolved, which implies a twist disappears.  
\end{itemize}
By an induction of the number of twists, we resolve every twist.  
\end{proof}
\begin{remark}
By using Lemma~\ref{lem4}, for a knot projection $P$ consisting of two-bridge knot projections via tangle sums or  connected sums, it is elementary to show that such $P$ is (1, 3) homotopic to the trivial spherical curve by using Propositions~\ref{proposition1} and \ref{proposition2}.     
\end{remark}

\section*{Acknowledgements}
We would like to explain the detail of the paper.     
First, Professor Tsuyoshi Kobayashi shared with a progress of the study of Ms.~Sumika Kobayashi for her Master thesis, on September 20, 2018.  This work included  $T(2k-1)$.    
Then, we sent a preprint (corresponding to an earlier version of this paper) with a slide of a talk in a Colloquium of Department of Mathematics, Faculty of Education, Waseda University on December 18, 2015, which includes Lemma~\ref{lem4}.  
The motivations of the two works, \cite{Ks, KK} and this paper are different.  
The author would like to thank Professor Tsuyoshi Kobayashi for encouraging us to write this paper and giving us an information of \cite{Ks, KK}.   Ms.~Sumika Kobayashi gave another application of Lemma~\ref{lem4} by other motivation of hers \cite{Ks, KK}.

The authors also would like to thank the referee for the comments.  


\begin{thebibliography}{99}
\bibitem{FHIKM} Y.~Funakoshi, M.~Hashizume, N.~Ito, T.~Kobayashi, and H.~Murai, A distance on the equivalence classes generated by deformations of type RI, \emph{J.~Knot Theory Ramifications} {\bf{27}} (2018), 1850066, 22pp.    
\bibitem{HY} T.~Hagge and J.~Yazinski, On the necessity of Reidemeister move 2 for simplifying immersed planar curves, \emph{Banach Center Publ.\ } {\bf{103}} (2014), 101--110.   
\bibitem{HI} M.~Hashizume and N.~Ito, New deformations on spherical curves and \"{O}stlund conjecture, a submitted preprint.
\bibitem{IT15} N.~Ito and Y.~Takimura, On a nontrivial knot projection under (1, 3) homotopy, \emph{Topology Appl.\ } 210 (2016) 22--28.
\bibitem{Ks} S.~Kobayashi, Stable double point numbers of pairs of spherical curves, Master thesis, Graduate School of Nara Women's University, 2019.  
\bibitem{KK} T.~Kobayashi and S.~Kobayashi, Stable double point numbers of pairs of spherical curves, \emph{JP J.\ of Geom.\ Topol.\ } {\bf 22}  (2019), 129--163.  
\bibitem{L} W.~B.~R.~Lickorish, An introduction to Knot Theory, Graduate Text in Mathematics, 175.  \emph{Springer-Verlag, New York}, 1997. 
\bibitem{O} O.-P.~\"{O}stlund, Invariants of knot diagrams and diagrammatic knot invariants, Ph.D. Thesis, Uppsala Univ., 2001.   
\end{thebibliography}
\end{document}